\algrenewcommand\textproc{}
\newtheorem{assumption}{\textbf{Assumption}}
\newtheorem{problem}{\textbf{Problem}}
\newtheorem{proposition}{\textbf{Proposition}}
\newtheorem{lemma}{\textbf{Lemma}}
\newtheorem{proof}{\textbf{Proof}}
\newtheorem{remark}{\textbf{Remark}}
\newtheorem{corollary}{\textbf{Corollary}}
\begin{document}
\begin{frontmatter}

\title{Mixed $\mathcal{H}_2/\mathcal{H}_\infty$-Policy Learning Synthesis} 

\thanks[footnoteinfo]{The author is with Microsoft Research, NYC.}

\author{Lekan Molu}

\address{Microsoft Research, 300 Lafayette Street, New York City, NY 10012. \\  lekanmolu@microsoft.com.}

\begin{abstract}                
		A robustly stabilizing optimal control policy in a model-free mixed $\mathcal{H}_2/\mathcal{H}_\infty$-control setting is here put forward for counterbalancing the slow convergence and non-robustness of traditional high-variance policy optimization (and by extension policy gradient) algorithms. Leveraging It\^{o}'s stochastic differential calculus, we iteratively solve the system's continuous-time closed-loop generalized algebraic Riccati equation whilst updating its admissible controllers in a two-player, zero-sum differential game setting. Our new results  are illustrated by learning-enabled control systems which gather previously disseminated results in this field in one holistic data-driven presentation with greater simplification, improvement, and  clarity. 
\end{abstract}

\begin{keyword}
	Robust control;  Data-driven optimal control; Machine learning in modelling, prediction, control and automation.
\end{keyword}

\end{frontmatter}
\definecolor{light-blue}{rgb}{0.30,0.35,1}
\definecolor{light-green}{rgb}{0.20,0.49,.85}
\definecolor{purple}{rgb}{0.70,0.69,.2}

\newcommand{\lb}[1]{\textcolor{light-blue}{#1}}
\newcommand{\bl}[1]{\textcolor{blue}{#1}}

\newcommand{\maybe}[1]{\textcolor{gray}{\textbf{MAYBE: }{#1}}}
\newcommand{\inspect}[1]{\textcolor{cyan}{\textbf{CHECK THIS: }{#1}}}
\newcommand{\more}[1]{\textcolor{red}{\textbf{MORE: }{#1}}}

\newcommand{\subfigureautorefname}{\figureautorefname}

\newcommand{\cmt}[1]{{\footnotesize\textcolor{red}{#1}}}
\newcommand{\todo}[1]{\textcolor{cyan}{TO-DO: #1}}
\newcommand{\review}[1]{\noindent\textcolor{red}{$\rightarrow$ #1}}
\newcommand{\response}[1]{\noindent{#1}}
\newcommand{\stopped}[1]{\color{red}STOPPED HERE #1\hrulefill}

\newcounter{mnote}
\newcommand{\marginote}[1]{\addtocounter{mnote}{1}\marginpar{\themnote. \scriptsize #1}}
\setcounter{mnote}{0}
\newcommand{\ie}{i.e.\ }
\newcommand{\eg}{e.g.,\ }
\newcommand{\cf}{c.f.\ }
\newcommand{\yes}{\checkmark}
\newcommand{\no}{\ding{55}}

\newcommand{\flabel}[1]{\label{fig:#1}}
\newcommand{\seclabel}[1]{\label{sec:#1}}
\newcommand{\tlabel}[1]{\label{tab:#1}}
\newcommand{\elabel}[1]{\label{eq:#1}}
\newcommand{\alabel}[1]{\label{alg:#1}}
\newcommand{\fref}[1]{\cref{fig:#1}}
\newcommand{\sref}[1]{\cref{sec:#1}}
\newcommand{\tref}[1]{\cref{tab:#1}}
\newcommand{\eref}[1]{\cref{eq:#1}}
\newcommand{\aref}[1]{\cref{alg:#1}}

\newcommand{\bull}[1]{$\bullet$ #1}
\newcommand{\argmax}{\text{argmax}}
\newcommand{\argmin}{\text{argmin}}
\newcommand{\mc}[1]{\mathcal{#1}}
\newcommand{\bb}[1]{\mathbb{#1}}

\def\tidx{t}
\newcommand{\Note}[1]{}
\renewcommand{\Note}[1]{\hl{[#1]}}  


\def\kau{\mc{K}}
\def\particle{\bm{x}}
\def\materialresponse{\bm{G}}
\def\orthoggroup{{\textit{SO}}(3)}
\def\liegroup{{\textit{SE}}(3)}
\def\liealgebra{\mathfrak{se}(3)}
\def\identity{\bm{I}}
\newcommand{\trace}[1]{\textbf{tr}(#1)}

\def\rot{{R}}
\def\flock{F}
\def\rthree{\bb{R}^3}
\def\reline{\bb{R}}
\def\targetset{\mathcal{L}}
\def\traj{\xi}
\def\ren{\bb{R}^n}
\def\skew{S}
\def\jacob{J}
\def\state{\bm{x}}
\def\statex{x}
\def\statey{y}
\def\statez{z}
\def\hot{h.o.t.\ }
\def\lhs{l.h.s.\ }
\def\rhs{r.h.s.\ }
\def\hinf{\mc{H}_\infty}
\def\htwo{\mc{H}_2}
\def\identity{I}
\def\costdiff{\mathbf{\tilde{V}}}
\def\pursuer{\bm{P}}
\def\evader{\bm{E}}
\def\gain{\bm{k}}
\def\control{\bm{u}}
\def\disturb{\bm{v}}
\def\switchcurve{\bm{\gamma}}
\def\valuefunc{\bm{V}}
\def\valueterm{\bm{g}}
\def\lpspace{L^2({\mc{S}}; \mc{F})}
\def\lpdual{L^2({\mc{S}}; \breve{\mc{F}})}
\def\valuetensor{\mathds{V}}
\def\wtensor{\mathds{W}}
\def\valuecore{\mathds{V}^c}
\def\hamfunc{\bm{H}}
\def\hamtensor{\mathds{H}}
\def\uppervalue{\bm{V}^+}
\def\lowervalue{\bm{V}^-}
\def\upperham{\bm{H}^+}
\def\lowerham{\bm{H}^-}
\def\hilbertparam{\bm{\phi}}
\def\hilbertcoeff{\bm{\psi}}
\def\hilbertparamspace{\bm{\Phi}}
\def\hilbertcoeffspace{\bm{\Psi}}
\def\hilbertspace{\mathcal{F}}
\def\hilbertdual{\mathcal{S}}
\def\reducedbasis{\Xi_r}
\def\basis{\mathbf{e}}
\def\openset{\Omega}
\def\partition{\Gamma}
\def\spatialdomain{\Omega}
\def\timeinterval{I}
\def\liederi{L}
\def\stspace{\mc{X}}
\def\eigmin{\underline{\sigma}}
\def\eigmax{\sigma_{max}}
\def\eigvamin{\underline{\lambda}}
\def\eigvamax{\bar{\lambda}}

\def\vec{\texttt{vec}}
\def\svec{\texttt{svec}}
\def\vecv{\texttt{vecv}}
\def\mat{\texttt{mat}}
\def\smat{\texttt{smat}}
\section{Introduction}

We consider system stabilization together with Zames' sensitivity compensation in plants disturbed by additive Wiener process and uncertainties~\citep{Zames1981} under model-free policy optimization and gradient settings. We pose our solution in a mixed $\htwo/\hinf$ linear quadratic (LQ) optimal control problem (OCP)~\citep{Khargonekar1988} within the family of policy optimization (PO) schemes. Connecting this mixed design synthesis to modern policy optimization algorithms in machine learning, we optimize a performance index that is the upper bound on the $\htwo$-norm of the plant transfer function subject to the plant's $\hinf$-norm constraints: we must find feasible stabilizing policies whilst guaranteeing robustness to a measure of disturbance~\citep{Zhang2021, CuiMoluxTAC}. 

%

PO algorithms, which encapsulate {policy gradient} (PG) methods ~\citep{ShamNPG, PolGradSham}, are attractive for modern data-driven problems since they
\begin{inparaenum}[(i)]
	\item admit continuously differentiable policy parameterization;
	\item  are easily extensible to function approximation settings; and
	\item admit structured state and control spaces.
\end{inparaenum}
As such, PG algorithms are increasingly becoming integral to modern engineering solutions, recommender systems, finance, and critical infrastructure given the growing complexity of the systems that we build and the massive availability of datasets. 
A major drawback of PG algorithms, however, is that they compute {high-variance} gradient estimates of the LQR costs from Monte-Carlo {trajectory rollouts} and {bootstrapping}. As such, they tend to possess slow convergence guarantees.

To address PG's characteristic non-robustness to uncertainty,  and its characteristic slow convergence, recent efforts have proposed mixed $\htwo/\hinf$ control proposal~\citep{Zhang2019, Zhang2021, CuiMoluxTAC} as a risk-mitigation design tool: imposing an additional $\hinf$-norm constraint on the  $\htwo$ cost to be minimized, one guarantees {robust stability and performance} in the presence of unforeseen uncertainties, noise, worst-case disturbance or incorrectly estimated dynamics -- signatures of PG algorithms.


Under {stabilizable} and {observable} system parameter conditions, ~\citep{Zhang2021} established {globally sub-linear} and {locally super-linear} convergence rates in linear quadratic (LQ) zero-sum dynamic game settings. We improved upon these convergence rates in~\citep{CuiMoluxTAC} by solving the PO problem recursively {given an initial stabilizing feedback gain that also preserved the $\hinf$ robustness metric}. In many modern engineering systems that employ PO, however, stochastic system parameters often have to be identified from  nonlinear system trajectory data. For these schemes to work, the control designer may need to linearize nonlinear trajectories about successive equilibrium points (whilst imposing the standard stabilizability and observability constraints on system parameters to be identified). In this paper, we take steps to curb our earlier stabilizability and observability assumptions in ~\citep{CuiMoluxTAC}.

\noindent \textbf{Contributions}: We here present a holistic synthesis of our previous dissemination, initiate a search  for the initial $\hinf$-norm constraints-preserving feedback gain, $K_1$, and demonstrate the efficacy of our results on a nonlinear numerical experimental setting. The rest of this paper is structured as follows: in \S \ref{sec:backs}, we introduce notations and contextualize the problem; in \S \ref{sec:methods} we present our methods; results that back up our claims are set forth in \S \ref{sec:results}. We draw conclusions in \S \ref{sec:conclude}.


\section{Preliminaries}
\label{sec:backs}

%
\subsection{Notations}
\label{sec:notations}

We adopt standard vector-matrix notations throughout.  Conventions: Capital and lower-case Roman letters are respectively matrices and vectors; calligraphic letters are sets. Exceptions: time variables \eg $t, t_0, t_f, T$ will always be real numbers.

The $n$-dimensional Euclidean space is  $\reline^n$. The real and imaginary parts of the complex $s$-plane are respectively $\texttt{Re}(s)$ and $\texttt{Imag}(s)$. The singular values of $A \in \reline^{n\times n}$ are  $\sigma_i(A), i=1,\cdots,n$. The standard $\hinf$ norm of a  complex matrix-valued function $G(j\omega)$ is defined over the analytic and bounded functions in the open right-half plane as $\|G(j\omega)\|_\infty = \sup_{\omega \in \mathbb{R}} \eigmax(G(j\omega))$ where $\eigmax(\cdot)$ denotes the maximum singular value. The $\mc{L}_2$ norm for a signal, function, or the induced matrix norm is  denoted  $\|\cdot\|_2$. We let $\{\lambda_i(X)\}_{i=1}^n$ denote the $n$-eigenvalues of $X \in \reline^{n\times n}$ where $\lambda_1 < \lambda_2 < \cdots <\lambda_n$. When an optimized variable \eg $u$ is optimal with respect to an index of performance, it shall be denoted $u^\star$.

All vectors are column-stacked. The Kronecker product of  $A \in
\reline^{m \times n}$ and $B \in \reline^{p \times q}$ is $A \otimes B$. Symmetric $n$-dimensional matrices shall belong in $\mathbb{S}^n$. A  positive definite (resp. negative definite) $A$ is written $A \succ 0$ (resp. $A \prec 0$).  We denote the index of a given matrix or vector by subscripts. Colon notation denotes the full range of a given index. Indexing ranges over $1, \cdots, n$ for an $n$-dimensional vector. The $i$th row of a given matrix $A$ is $A_{[i,:]}$, while the $j$th column of $A$ is $A_{[:,j}]$. Blockwise indexing follows similar conventions.

Denote by $x_{ij}$ the $(ij)$'th entry of $X \in \reline^{m\times n}$ and by $x_{i}$ the $i$'th element of $x \in \reline^{n}$. The full vectorization of $X  \in \reline^{m\times n}$ is the $mn\times 1$ vector obtained by stacking the columns of $X$  on top of one another \ie $\vec(X) = \left[x_{11}, x_{21}, \cdots, x_{m1}, x_{12}, \cdots, x_{m2}, \cdots, x_{mn}\right]^T$. Let $P \in \bb{S}^n$, then the half-vectorization of $P$ is the $n(n+1)/2$ column vector as a result of a vectorization of upper-triangular part of $P$ i.e.  $\svec(P)=\left[p_{11}, p_{12}, \cdots, p_{1n}, \cdots, p_{nn}\right]^T$. The vectorization of the dot product $\langle x, x^T\rangle $, where $x
 \in \ren$,  is  $\vecv(x) := [x_1^2, \cdots, x_1x_n, x_2 x_1, x_2^2, x_2x_3,\cdots,x_n^2]^T$. The inverse of $\vec(x)$ and $\svec(y)$  are respectively the full and symmetric matricizations: $ \mat_{m\times n}(x)=\left(\vec(I_n)^T \otimes I_m\right) \linebreak[0]\left(I_n \otimes x\right)$, and $\smat_m(P)$ so that $ \smat(\svec(p))=P$. Here, $x \in \reline^{mn}$ and $y \in \reline^{m(m+1)/2}$ for $n,m \in \reline_{\ge 0}$. Finally, we denote by $T_{\vec}(A)$ the vectorization of $A^T$ \ie $\vec(A^T) = T_{\vec}(\vec(A))$.

\subsection{System Description}
\label{subsec:sys}
Consider the following nonlinear system
\begin{subequations}
	\begin{align}
		\dot{x}(t) &= f(x; t) + g(x)u(t) + h(x)w(t), \, x(0) = x_0 \\
		z(t) &= \mc{G}(x, u; t), \,\, z(0) = z_0
	\end{align}
	\label{eq:nlnr}
\end{subequations}
where $f(\cdot), \, g(\cdot), \, h(\cdot)$, and $\mc{G}(\cdot)$ are nonlinear functions with appropriate dimensions. The state process is $x\in \ren$, the controlled output process is  $z\in \reline^m$, the control input is $u \in \mc{U} \subseteq \reline^{\mathfrak{p}}$, and the vector-valued (stochastic) Wiener process is $w \in \mc{W} \subseteq \reline^{\mathfrak{q}}$. Let the following finite-dimensional linear time-invariant (FDLTI) system describe the resulting linearized stochastic differential equation
\begin{subequations}
	\begin{align}
		dx(t) &= A x(t) dt + B_1 u(t) dt + B_2 dw(t), \,\,x(0) = x_0 \label{eq:leqg} \\
		z(t) &= C x(t) + D u(t), \quad  z(0)=0,
	\end{align}
	\label{eq:system}
\end{subequations}
where $dw$ is the Gaussian white noise; $x(0)$ is an arbitrary zero-mean Gaussian random vector  independent of $w(t)$; and $A, \, B_1, \, B_2, \, C, \, D$ are real matrix-valued functions of appropriate dimensions.  The random signal, $x(0)$, and process $w(t)$ are defined over a complete probability space $(\Omega, \mc{F}, \mc{P})$ where $\Omega$ is $w$'s sample space, $\mc{F}$ is the $\sigma$-algebra \ie the filtration generated by $w$, and $\mc{P}$ is the probability measure on which $w(t)$ is drawn for a $t \in [0, T]$ (where $T>0$ is fixed).

\begin{assumption}
	We impose the following conditions on the algorithm to be presented. We take $C^TC \triangleq Q\succ 0$, $D^T\left(C, \, D\right) = \left(0, \, R \right)$ for some $R\succ 0$; and should one wish that the noise process in \eqref{eq:system} be statistically independent, then we may take $B_2 B_2^T$ = 0. Seeing we are seeking a linear feedback controller for \eqref{eq:system}, we require that the pair $(A, B_1)$ be \textit{stabilizable}. We expect to compute solutions via an optimization process, therefore we require that \textit{unstable modes of $A$ must be observable through $Q$}. Whence $(\sqrt{Q}, A)$ must be \textit{detectable}.
	\label{ass:realizability}
\end{assumption}

\begin{problem}[Problem Statement]
	The \textit{goal is to keep the controlled process, $z$, small in an infinite-horizon LTI constrained optimization setting under a minimizing control $u$ in spite of unforeseen disturbances $w$}. 
	%
\end{problem}

Let the closed-loop operator (under an arbitrary negative feedback gain $K \in \mc{K}, u(x(t)) = -K x(t)$) mapping $w$ to $z$ be $\|T_{zw}(K)\|_2$. Then, 
\begin{align}
	T_{zw}(K) = \left(C-DK\right)(s\identity-A+B_1 K)^{-1} B_2.
	\label{eq:tranfer_cl}
\end{align}
Or in \citep{ZhouEssentials}'s packed representation, 
\begin{align}
	T_{zw}(K) \triangleq \left[\begin{array}{c|c}
		A - B_1 K & B_2 \\
		\hline \\
		C-DK & 0
	\end{array}\right]. 
	\label{eq:packed}
\end{align}

Design principles in linear control theory exist for solving problem \eqref{eq:system} when the covariance of the noise model has a small magnitude. For stochastic $\htwo$ control problems with large noise intensities (such as PG methods), it suffices to solve a \textit{linear exponential quadratic control problem} under a robustness constraint. To further contextualize the problem, let us formally introduce the design problem.

\subsection{Risk-Sensitive LEQG as a Mixed Design Problem}
In ~\citep{ZhangLongForm}, the authors established  that the risk-sensitive infinite-horizon linear exponential quadratic Gaussian (LEQG) state-feedback control problem~\citep{Jacobson1973, Whittle1981} is an equivalent mixed-$\htwo/\hinf$ control design  problem for \textit{linear time-invariant} systems with additive noise of the form \eqref{eq:system}. We iterate upon this contribution  since it introduces a measure of risk-design as an implicit robustness metric when the process noise has a large covariance intensity. And this is typical for policy gradient settings. The state evolves according to \eqref{eq:leqg} and without loss of generality, the stochastic linear system's performance criterion is
\begin{align}
	\mc{J}(K) &= \limsup_{t_f\rightarrow \infty}\dfrac{2 \gamma^2}{t_f}  \log \bb{E} \text{ exp }\left[\dfrac{1}{2 \gamma^2}\int_{t=0}^{t_f}\langle z(t), z(t) \rangle dt \right].
	\label{eq:leqg_perf}
\end{align}
Suppose that the variance term $\gamma^{-2} \texttt{var}(z^Tz)$ is small, then $\gamma$ is a measure of \textit{risk-propensity} if $\gamma >0$; similarly, $\gamma$ can be considered as a measure of \textit{risk-aversion} if $\gamma < 0$; and $\gamma$ is a measure of \textit{risk-neutrality} if $\gamma=0$ (equivalent to the standard state-feedback LQP). Given LEQG's connection under risk-propensity to the high-variance associated with PG algorithms, throughout the rest of this paper we take $\gamma>0$ in our optimization process.

\subsection{Mixed $\htwo/\hinf$-Policy Optimization Synthesis} We now define the standard mixed $\htwo/\hinf$ control problem: given system \eqref{eq:system} and a real number $\gamma>0$, find an \textit{admissible controller} $K$ that exponentially\footnote{Concerning matters relating to linear systems, we take exponential stability to mean internal stability, so that the transfer matrix belongs in the real-rational $\hinf$ space \ie $T_{zw} \in \mc{RH}_\infty$.}
stabilizes \eqref{eq:tranfer_cl} and renders  $\|T_{zw}\|_\infty < \gamma$. The set of all \textit{suboptimal} controllers that robustly stabilizes \eqref{eq:system} against all (finite gain) stable perturbations $\Delta$, interconnected to the system by $w = \Delta z$, such that $\|\Delta\|_\infty \le 1/\gamma$ can be succinctly denoted as
\begin{align}
	\mc{K} = \{\,K: \,\,\lambda_i(A-B_1K) < 0,  \,\, \|T_{zw}(K)\|_\infty < \gamma \}
	\label{eq:constraint}
\end{align}
\text{ for } $i=1, \cdots, n$.
We say $\mc{K} \neq \emptyset$ if the pair $(A, B_1)$ is stabilizable and $(C, A)$ is detectable \cf \eqref{eq:system}.

Aside from the constraint \eqref{eq:constraint}, the mixed $\htwo/\hinf$ performance measure can be framed as minimizing an ``upper-bound" on the $\htwo$-norm of the cost subject to the constraint $\|T_{zw}\|_\infty < \gamma$~\citep{Bernstein1989} for a $\gamma>0$. Abusing notation, let $\mc{J}(T_{zw})$ denote the (closed-loop) mixed $\htwo/\hinf$-control performance measure for the LTI system \eqref{eq:system}.
\begin{lemma}
	The problem \eqref{eq:system} with (a slightly abused) quadratic performance measure \eqref{eq:leqg_perf} \ie
	\begin{align}
		\mc{J}(T_{zw}) &=  \bb{E}\left\{\limsup_{t_f \rightarrow \infty} \left[\text{ exp }\left(2 \gamma^{-2}\int_{t=0}^{t_f}\langle z(t), z(t) \rangle dt \right) \right]\right\}
		\label{eq:leqg_perf_abused}
	\end{align}
	 admits a unique solution $x(t)$ to \eqref{eq:system} after optimizing  $\min_{K\in  \mc{K}}\mc{J}(\cdot)$ under the \textit{unique and optimal controller},
	\begin{align}
		u^\star(x(t)) = -R^{-1} B_1^T P(t) x(t), \quad t \in [0, \,t_f].
		\label{eq:duncan_control}
	\end{align}
	In \eqref{eq:duncan_control}, $P(t)$ is the unique, symmetric positive solution to the continuous-time (closed-loop) generalized algebraic Riccati equation (GARE)
	\begin{align}
	PA + A^TP - P\left(B_1 R^{-1}B_1^T -\gamma^{-2}B_2B_2^T\right)P + Q=0
	\label{eq:closed_loop_riccati}
	\end{align}
	if $Q \succ 0$ for a $\gamma >0$.
\end{lemma}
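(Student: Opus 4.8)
My plan is a completion-of-squares argument anchored to It\^o's differential rule, followed by uniqueness from strict convexity and from the stabilizing character of the GARE solution. First I would take as ansatz the quadratic ``value'' $V(x) = x^T P x$, with $P = P^T$ a solution of the GARE \eqref{eq:closed_loop_riccati}: under Assumption \ref{ass:realizability} and for the prescribed $\gamma>0$ rendering $\mc{K}\neq\emptyset$, the classical $\hinf$-algebraic-Riccati theory supplies a unique \emph{stabilizing} such $P$, and since $Q \succ 0$ a standard Lyapunov argument applied to the identity $P A_c + A_c^T P = -Q - P B_1 R^{-1} B_1^T P - \gamma^{-2} P B_2 B_2^T P \prec 0$, with $A_c := A - B_1 R^{-1} B_1^T P$ obtained by subtracting $2 x^T P B_1 R^{-1} B_1^T P x$ from \eqref{eq:closed_loop_riccati}, forces $P \succ 0$ and $A_c$ Hurwitz.

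Next I would apply It\^o's formula to $V(x(t))$ along \eqref{eq:leqg}, namely
\[
dV(x(t)) = \bigl[\,2 x^T P (A x + B_1 u) + \trace{B_2^T P B_2}\,\bigr]\, dt + 2 x^T P B_2 \, dw ,
\]
and then to the exponential $\Psi(t) := \exp\!\bigl(\tfrac{1}{2\gamma^2}\int_0^t \langle z(s), z(s)\rangle\, ds + \tfrac{1}{2\gamma^2} V(x(t))\bigr)$ that underlies \eqref{eq:leqg_perf}. Writing $\langle z, z\rangle = x^T Q x + u^T R u$ (from $D^T C = 0$, $D^T D = R$ in Assumption \ref{ass:realizability}), substituting $x^T(PA + A^T P)x$ from \eqref{eq:closed_loop_riccati}, and collecting the second-order It\^o correction $\gamma^{-2} x^T P B_2 B_2^T P x$, the drift of $\Psi$ collapses to
\[
\frac{\Psi(t)}{2\gamma^2}\Bigl[\bigl\| R^{1/2}\bigl(u + R^{-1} B_1^T P x\bigr)\bigr\|^2 + \trace{B_2^T P B_2}\Bigr]\, dt .
\]
Hence $\tilde\Psi(t) := \Psi(t)\, \exp\!\bigl(-\trace{B_2^T P B_2}\, t /(2\gamma^2)\bigr)$ is a submartingale for every $K \in \mc{K}$ and a martingale precisely when $u = -R^{-1} B_1^T P x$, since the residual square is nonnegative and its $u$-dependent part vanishes only there. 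Taking expectations over $[0,t_f]$ then gives $\mathbb{E}\,\Psi(t_f) \ge \exp\!\bigl(\tfrac{1}{2\gamma^2}x_0^T P x_0 + \tfrac{1}{2\gamma^2}\trace{B_2^T P B_2}\, t_f\bigr)$ with equality under $u^\star$; because $R \succ 0$ the pointwise minimizer is unique, which both pins down \eqref{eq:duncan_control} and --- after passing $t_f \to \infty$, with the Hurwitz $A_c$ controlling the growth --- identifies $u^\star$ as the unique optimizer of the risk-sensitive cost \eqref{eq:leqg_perf}, of which \eqref{eq:leqg_perf_abused} is the stated informal rendering. Uniqueness of the trajectory $x(t)$ is then immediate: with $u^\star$ substituted, \eqref{eq:leqg} becomes the linear SDE $dx = A_c x\, dt + B_2\, dw$, whose linear (hence globally Lipschitz) coefficients guarantee a unique strong solution on $[0,t_f]$ for each $x_0$.

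I expect the principal obstacle to be the rigorous justification of the exponential-martingale step: one must verify that the local-martingale part $\int_0^t \gamma^{-2} x^T P B_2\, dw$ actually exponentiates to a \emph{true} martingale --- a Novikov-/Kazamaki-type exponential-moment bound on $x(t)$ --- and this is exactly where admissibility of $\gamma$, equivalently solvability of \eqref{eq:closed_loop_riccati} with $P \succ 0$, is indispensable, since for $\gamma$ below the $\hinf$ threshold the risk-seeking cost is infinite and no finite optimum exists. A secondary wrinkle is the terminal term $V(x(t_f))$ carried by the finite-horizon identity but absent from the infinite-horizon cost; this is handled by first running the argument with the differential GARE on $[0,t_f]$ subject to $P(t_f)=0$ and then invoking its convergence, as $t_f \to \infty$, to the stationary $P$ solving \eqref{eq:closed_loop_riccati}.
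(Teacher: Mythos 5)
Your completion-of-squares/It\^{o} argument is essentially the paper's proof: the paper disposes of this Lemma in one sentence by citing Duncan's Theorem II.1, which rests on exactly the completion of squares you carry out, with your exponential (local) martingale $\tilde\Psi$ playing the role of the Radon--Nikodym derivative in Duncan's change-of-measure step. Your drift computation is correct, and you rightly flag the two points (the Novikov-type verification that the exponential local martingale is a true martingale, and the finite-to-infinite-horizon passage discarding the terminal term $V(x(t_f))$) that the paper, by deferring entirely to the citation, does not address either.
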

\begin{proof}
	This Lemma is the infinite-horizon retrofitting of Duncan's solution to the LEQG control value function based on a standard completion of squares and a Radon-Nikodym derivative~\citep[Th II.1]{Duncan2013}.
\end{proof}
\begin{corollary}[Th 9.7 \cite{BasarBook}]
	The GARE \eqref{eq:closed_loop_riccati} in an infinite-horizon LTI setting admits an equivalent \textit{LQ two-player zero-sum differential game} with the following \textit{upper value} 
	\begin{align}
		\begin{split}
			\mc{J}(T_{zw}) =  \limsup_{t_f \rightarrow \infty} \inf_{u \in \mc{U}} \sup_{w \in \mc{W}}  \int_{t=0}^{t_f}\left[x^T(t)Qx(t) +  \right. \\ \left.  u(\cdot)^TR u(\cdot)-\gamma^{-2} w^T(t)w(t)\right] dt, \,\, \forall \, x \in \ren
		\end{split}
		\label{eq:lq_two_player}
	\end{align}
	subject to assumption \ref{ass:realizability}~\citep[\S. 9.7]{BasarBook}. Note that $\gamma >0$ can be interpreted as an upper bound on the $L_2$ gain disturbance attenuation or the $\hinf$-norm of the system. In addition, let a finite scalar $\gamma^\infty >0$ exist, then for all $\Gamma \triangleq \inf \{\gamma > \gamma^\infty\}$, \eqref{eq:closed_loop_riccati} has a unique, finite, and positive definite solution if $(C, A)$ is observable.
\end{corollary}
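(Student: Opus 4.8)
The plan is to prove the Corollary in two movements: a dynamic-programming (Isaacs) argument that reads the GARE off the value of the zero-sum game, and then an appeal to the indefinite/$\hinf$ Riccati theory underlying \S 9.7 of \cite{BasarBook} for the existence, uniqueness, finiteness and definiteness assertions. For the first movement I would posit the quadratic candidate value $V(x)=x^{T}Px$ with $P=P^{T}$ and write the stationary Hamilton--Jacobi--Isaacs equation attached to the running payoff of \eqref{eq:lq_two_player} along the linear dynamics \eqref{eq:leqg},
\[
\min_{u\in\mc{U}}\max_{w\in\mc{W}}\Big\{2\,x^{T}P\big(Ax+B_{1}u+B_{2}w\big)+x^{T}Qx+u^{T}Ru-\gamma^{-2}w^{T}w\Big\}=0 .
\]
The bracketed integrand is strictly convex in $u$ because $R\succ0$ and strictly concave in $w$ because the disturbance is penalised by a negative-definite form and enters the dynamics affinely; hence the Isaacs condition holds, the $\min$ and $\max$ commute, and the upper value of \eqref{eq:lq_two_player} equals its lower value. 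Carrying out the two unconstrained quadratic optimisations returns the inner minimiser $u^{\star}=-R^{-1}B_{1}^{T}Px$ --- exactly \eqref{eq:duncan_control} --- and a worst-case disturbance $w^{\star}$ that is linear in $Px$; substituting both back and matching the coefficient of the quadratic form $x^{T}(\cdot)x$ collapses the HJI identity to the GARE \eqref{eq:closed_loop_riccati}. A verification step --- integrating $\tfrac{d}{dt}\,x^{T}Px$ along \eqref{eq:leqg}, using \eqref{eq:closed_loop_riccati}, and letting $t_f\to\infty$ --- then shows the closed-loop game value is $x_{0}^{T}Px_{0}$ and that $(u^{\star},w^{\star})$ is a saddle point, which is the claimed equivalence.

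For the second movement the key point is that \eqref{eq:closed_loop_riccati}, unlike the standard LQR algebraic Riccati equation, is \emph{indefinite}: the term $-\gamma^{-2}B_{2}B_{2}^{T}\preceq0$ removes the monotonicity that usually drives convergence and uniqueness of Riccati solutions, so I would lean on the game-Riccati/$\hinf$ machinery. Introduce $\gamma^{\infty}>0$ as the infimal attenuation level at which a real symmetric \emph{stabilising} solution exists --- one for which $A-B_{1}R^{-1}B_{1}^{T}P+\gamma^{-2}B_{2}B_{2}^{T}P$ is Hurwitz --- and use the monotone (and continuous) dependence of this solution on the parameter $\gamma^{-2}$ to conclude that for every $\gamma>\Gamma=\inf\{\gamma>\gamma^{\infty}\}$ the GARE has a \emph{unique} stabilising solution $P\succeq0$. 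Hurwitzness of the closed-loop generator is exactly what forces the game trajectory under $(u^{\star},w^{\star})$ to decay exponentially, so the $\limsup_{t_f\to\infty}$ in \eqref{eq:lq_two_player} is finite; this discharges the ``finite'' claim. To upgrade $P\succeq0$ to $P\succ0$ I would invoke observability of $(C,A)$ --- equivalently detectability/observability of $(\sqrt{Q},A)$ from Assumption~\ref{ass:realizability}: if $Px=0$ for some $x\neq0$, pre- and post-multiplying \eqref{eq:closed_loop_riccati} by $x$ forces $x^{T}Qx=0$ and, propagating this through the closed-loop dynamics, places $x$ in the unobservable subspace of $(\sqrt{Q},A)$, a contradiction. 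Finally, recognising \eqref{eq:closed_loop_riccati} as the Riccati equation of the bounded-real lemma for the closed loop yields $\|T_{zw}(K^{\star})\|_{\infty}<\gamma$, i.e.\ $\gamma$ is an $\mc{L}_{2}$-gain / $\hinf$ bound on the disturbance-to-output map and $K^{\star}\in\mc{K}$.

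I expect the delicate part to be entirely in the second movement: pinning down $\gamma^{\infty}$ and proving that the stabilising solution depends monotonically and continuously on $\gamma$ (needed both for uniqueness and for ruling out finite escape / an infinite value), together with the strictness argument promoting $P\succeq0$ to $P\succ0$. Once the Isaacs condition is in hand, the first movement is routine quadratic algebra and a standard completion-of-squares verification.
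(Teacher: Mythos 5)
The paper offers no proof of this corollary at all --- it is stated as a pure citation to \citep[\S 9.7]{BasarBook} (and Th 4.8 for the companion result), so your two-movement sketch is in effect a reconstruction of the textbook argument the paper is pointing at. Your outline is the right one and matches the standard soft-constrained LQ game treatment: quadratic ansatz, separable Hamiltonian so the Isaacs condition is automatic, completion of squares for verification, and then the game-Riccati machinery (existence of a stabilising solution above the critical attenuation level $\gamma^\infty$, monotone dependence on the attenuation parameter, and observability of $(C,A)$ to promote $P\succeq 0$ to $P\succ 0$). You also correctly identify where the real work lives, namely the second movement; the paper's Lemma~1 handles the companion existence claim by citing Duncan's completion-of-squares/Radon--Nikodym argument, which is consistent with your first movement.

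One concrete issue you would hit if you executed the ``routine quadratic algebra'' literally: the disturbance weight in \eqref{eq:lq_two_player} as printed is $-\gamma^{-2}w^{T}w$. Maximising $2x^{T}PB_{2}w-\gamma^{-2}w^{T}w$ over $w$ gives $w^{\star}=\gamma^{2}B_{2}^{T}Px$ and contributes $+\gamma^{2}PB_{2}B_{2}^{T}P$ to the Riccati equation, which does \emph{not} reproduce the $\gamma^{-2}B_{2}B_{2}^{T}$ term in \eqref{eq:closed_loop_riccati}, nor the formula $w^{\star}=\gamma^{-2}B_{2}^{T}P_{\gamma}x$ in \eqref{eq:disturb_control}. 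The internally consistent choice (and the standard one in \cite{BasarBook}) is a penalty of $-\gamma^{2}w^{T}w$; your proof should either adopt that weight explicitly or flag the mismatch, since your write-up currently copies the printed integrand and then asserts the GARE falls out, which it does not. Separately, the corollary's quantifier ``for all $\Gamma\triangleq\inf\{\gamma>\gamma^{\infty}\}$'' only parses if read as ``for all $\gamma>\gamma^{\infty}$,'' which is how you (reasonably) read it; it is worth saying so explicitly rather than silently repairing the statement.
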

\begin{corollary}[Th 4.8,~\citep{BasarBook}]
	If $\Gamma \neq \emptyset$, and if the LQ zero-sum differential game has a closed-loop perfect-state information structure defined on $[0, t_f], \,t_f \rightarrow \infty$, then \eqref{eq:lq_two_player} \textit{admits a unique solution} with feedback controls
	\begin{align}
		u^\star(t) = -R^{-1} B_1^T P_\gamma x(t), \,\, w^\star(t) = \gamma^{-2} B_2^T P_\gamma x(t)
		\label{eq:disturb_control}
	\end{align}
	for a $t \ge 0, \gamma > \gamma^\infty$. Note that $P_\gamma$ is the unique solution to \eqref{eq:closed_loop_riccati} in the class of positive definite feedback matrices (where the subscript $\gamma$ on $P$ denotes its direct dependence on $\gamma$) which makes the following feedback matrix Hurwitz,
	\begin{align}
		A_\gamma = A - (B_1R^{-1}B_1^T - \gamma^{-2}B_2B_2^T)P_\gamma.
		\label{eq:AHurwitz}
	\end{align}
	\label{cor:gamma_inf}
\end{corollary}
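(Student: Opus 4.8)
The plan is to obtain Corollary~\ref{cor:gamma_inf} as the stationary ($t_f\to\infty$) limit of the classical finite-horizon saddle-point analysis for linear-quadratic zero-sum games, and then to pin down the limiting Riccati kernel $P_\gamma$ as the \emph{stabilizing} solution of the GARE~\eqref{eq:closed_loop_riccati}. The single structural fact I would lean on is that $\Gamma\neq\emptyset$ --- equivalently $\gamma>\gamma^\infty$ --- forces the disturbance to enter the payoff~\eqref{eq:lq_two_player} through a uniformly \emph{concave} quadratic form, so that the upper and lower game values coincide and $\inf_u$ and $\sup_w$ may be exchanged; dually, $Q\succ0$ and $R\succ0$ render the payoff strictly convex in the control.

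First, for a fixed horizon $t_f<\infty$ I would attach to~\eqref{eq:lq_two_player} the matrix Riccati differential equation
\begin{align}
-\dot{P}(t) \;=\; P(t)A + A^{T}P(t) - P(t)\!\left(B_1R^{-1}B_1^{T}-\gamma^{-2}B_2B_2^{T}\right)\!P(t) + Q, \qquad P(t_f)=0,
\label{eq:rde_prop}
\end{align}
whose boundedness, symmetry, and positive semidefiniteness on all of $[0,t_f]$ (no conjugate point, no finite escape) is exactly what $\gamma>\gamma^\infty$ secures --- this is the content of Th.~9.7 in~\cite{BasarBook}. Adding and subtracting $\tfrac{d}{dt}\!\left(x^{T}P(t)x\right)$ inside the integral and using~\eqref{eq:rde_prop} along the dynamics~\eqref{eq:leqg} collapses the payoff to $x_0^{T}P(0;t_f)x_0 + \int_0^{t_f}\!\big[\|u-u^\star\|_R^2 - \gamma^{-2}\|w-w^\star\|^2\big]\,dt$, with $u^\star$ and $w^\star$ the linear feedbacks of~\eqref{eq:disturb_control} evaluated at $P(t;t_f)$ rather than $P_\gamma$. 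Pointwise minimization over $u$ and maximization over $w$ then returns the finite-horizon value $x_0^{T}P(0;t_f)x_0$ together with the saddle policies, the strict convexity/concavity just noted licensing the interchange of the extrema.

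Next I would pass to $t_f\to\infty$. By the comparison theory for Riccati differential equations, $t_f\mapsto P(0;t_f)$ is nondecreasing in the positive semidefinite order; stabilizability of $(A,B_1)$ furnishes an admissible stabilizing control that, against the worst-case $w$, yields a finite infinite-horizon payoff and hence a uniform upper bound $P(0;t_f)\preceq\bar P$. A bounded monotone sequence of symmetric matrices converges, so $P(0;t_f)\uparrow P_\gamma\succeq0$, and letting $t_f\to\infty$ in~\eqref{eq:rde_prop} shows $P_\gamma$ solves~\eqref{eq:closed_loop_riccati}; $Q\succ0$ upgrades this to $P_\gamma\succ0$. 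That $A_\gamma$ in~\eqref{eq:AHurwitz} is Hurwitz is the delicate point: it does \emph{not} drop out of a Lyapunov identity, since substituting the saddle feedbacks into~\eqref{eq:closed_loop_riccati} gives $A_\gamma^{T}P_\gamma+P_\gamma A_\gamma = -(Q+K^{T}RK)+\gamma^{-2}P_\gamma B_2B_2^{T}P_\gamma$ with $K=R^{-1}B_1^{T}P_\gamma$, whose right-hand side is indefinite. Instead I would show that the monotone limit $P_\gamma$ is exactly the kernel selected by the stable invariant subspace of the Hamiltonian pencil associated with~\eqref{eq:closed_loop_riccati} --- a dichotomy argument that again needs $\gamma>\gamma^\infty$ --- so that $A_\gamma$ inherits that subspace's spectrum and is Hurwitz; an equivalent route argues by contradiction that a non-Hurwitz mode of $A_\gamma$ would render $P(0;t_f)$ unbounded. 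Uniqueness of $P_\gamma$ within the positive definite stabilizing class then follows by subtracting two such solutions and reading the difference off a Lyapunov equation driven by a Hurwitz matrix, which forces it to vanish; reinserting $P_\gamma$ into~\eqref{eq:disturb_control} closes the argument.

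The step I expect to be the main obstacle is precisely this infinite-horizon passage: keeping the $\hinf$ constraint $\|T_{zw}\|_\infty<\gamma$ alive along the whole sequence $P(\cdot\,;t_f)$ so that concavity in $w$ does not degenerate in the limit (otherwise the RDE escapes in finite time and the game value is $+\infty$), while simultaneously exhibiting the uniform bound $\bar P$ and the Hurwitz property of $A_\gamma$. All three rest on Assumption~\ref{ass:realizability} --- stabilizability of $(A,B_1)$ and detectability of $(\sqrt{Q},A)$; everything else is the standard saddle-point computation of Th.~4.8 in~\cite{BasarBook} specialized to system~\eqref{eq:system}.
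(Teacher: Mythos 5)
The paper supplies no proof of this corollary at all: it is imported directly as Theorem 4.8 of \citep{BasarBook} (with Theorem 9.7 of the same reference covering existence of the GARE solution for $\gamma>\gamma^\infty$), so the ``paper's proof'' is the citation itself. Your sketch --- finite-horizon completion of squares against the Riccati differential equation, monotone and bounded passage to the stationary GARE, selection of the stabilizing solution, and Lyapunov-equation uniqueness --- is a correct reconstruction of exactly that standard argument, so it takes essentially the same route the paper implicitly relies on.
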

\begin{remark}
	Clearly, \eqref{eq:lq_two_player} is a minimax problem whose controller admits the form
	\begin{align}
		\begin{split}
		\min_{u \in \mc{U}} \max_{w \in \mc{W}}	\mc{J}(T_{zw}) =  \limsup_{t_f \rightarrow \infty}   \int_{t=0}^{t_f}\left[x^T(t)Qx(t)  \right. \\ \left.  +u^T(\cdot) \, R \, u(\cdot)-\gamma^{-2} w^T(t)w(t)\right] dt, \quad \forall \, x \in \ren.
		\end{split}
		\label{eq:lq_minimax}
	\end{align}
	Another common form of $\mc{J}(T_{zw})$ easily amenable to policy gradient algorithms is $J(T_{zw}) = \text{Tr}(P_\gamma B_2 B_2^T)$~\citep{Mustafa1989}.
\end{remark}

\begin{remark}
	The cost \eqref{eq:lq_minimax} is nonconvex and not coercive~\citep{Zhang2021}. However, our iterative solver~\citep{CuiMoluxTAC} 
	guarantees uniform linear convergence of the iterates during optimization.
\end{remark}

\begin{remark}
	The objective \eqref{eq:lq_minimax} is differentiable for any $K \in \mc{K}$, and its policy gradient $\nabla\mc{J}(T_{zw}):= 2(RK-B^TP_\gamma)\Lambda_\gamma \linebreak[0]$; here, $\Lambda_\gamma$ admits a form amenable to a (continuous time) closed-loop Lyapunov equation~\citep[Lemma A.4]{Zhang2021} i.e.,
	\begin{align}
		\begin{split}
			\Lambda_\gamma(A - B_1 K + \gamma^{-2} B_2 B_2^T P_\gamma)^T + \\
			(A - B_1 K + \gamma^{-2} B_2 B_2^T P_\gamma)\Lambda_\gamma + B_2 B_2^T = 0.
		\end{split}
	\end{align}
\end{remark}

For $\gamma>0$ and $\gamma \neq \sigma_i(B_2), i=1,\cdots,n$, we define the following closed-loop Hamiltonian matrix for a $(\gamma, K)$ pair
%
\begin{align}
	H(\gamma, K) 
	& = \begin{bmatrix}
		A - B_1 K & -\gamma^{-1} B_2 B_2^T \\
		-\gamma^{-1}(C^TC + K^TRK) & -(A-B_1 K)^T
	\end{bmatrix}
	\label{eq:ham_cl}
\end{align}
where we have used $R = (D^TD - \gamma^2I)$ and $S = (DD^T - \gamma^2I)$ as in~\citep[Eq. 2.2]{HinfFastCompute}.

\section{Methods}
\label{sec:methods}

We now introduce a nonlinear identification procedure, followed by linearization, and closed-loop $\hinf$ parameter search schemes. We close the section with an iterative solver for the GARE \eqref{eq:closed_loop_riccati}. 

\subsection{Nonlinear Identification and Linearization}
\label{subsec:narmax}
We remark that the user is not limited to the method to be introduced but in our experience, our identification scheme is interpretable and useful for debugging real-world and physical systems. 
We use the parsimonious \texttt{\textbf{N}onlinear} \texttt{\textbf{A}uto-\textbf{R}egressive \textbf{M}oving \textbf{A}verage with e\textbf{X}ogeneous input} (\texttt{NARMAX}) \citep{NARMAXOLS}. which has powerful yet simple parsimonious representation capability  on real systems. We first identified a suitable \texttt{NARMAX}  structure and model parameters, compute equilibrium points -- about which we linearized to a form of \eqref{eq:system}, before we estimate the \textit{robustly stabilizing and optimal control policy} for the mixed $\htwo/\hinf$-control problem. 

Suppose that an input-output data from a real system defined by \eqref{eq:nlnr} has been collected. Denote this as $D^N = \{z_1, \cdots, z_m, u_1, \cdots, u_p, w_1, \cdots, w_q\}$. Let the maximum lags in the input, disturbance, and output data be denoted by $n_u, \, n_w$, and $n_y$ respectively. We fit a polynomial \texttt{NARMAX}  model to $D^N$ with the  power-form $\ell$-degree polynomial,
\begin{align}
		&z(t) = \theta_0 + \sum_{i_1 = 1}^{n} \theta_{i_1} x_{i_1}(t) + \sum_{i_1 = 1}^{n} \sum_{i_2 = i_1}^{n}\theta_{i_1}\theta_{i_2}x_{i_1}(t)u_{i_2}(t)\cdots
		\nonumber \\
		&+\sum_{i_1 = 1}^{n} \cdots \sum_{i_\ell = i_{\ell-1}}^{n}\theta_{i_1} \cdots \theta_{i_\ell}\,x_{i_1}(t)\cdots x_{i_\ell}(t) + e(t)
	\label{eq:narmax}
\end{align}
whose parameters $\theta_{i_1 \ldots i_m}, m \in [1, l]$ are to be identified. The model structure has order $n = n_z+n_u + n_w + n_e$, where $n_e$ is the maximum order for a \textit{pseudo-random binary sequence} $e(k)$ that aids identification robustness. The state variables are explicitly, 
\begin{small}
	\begin{align}
		x_m(k) = \begin{cases}
			z(k-m), \quad 1 \le m \le n_z \\
			u(k-(m-n_z)), \quad n_z +1 \le m \le n_z + n_u \\
			w(k-(m-n_z-n_u)), \, n_z + n_u+1 \le m 
			\\ \qquad \qquad\qquad\qquad\qquad\qquad \le n_z + n_u + n_w\\
			e(k-(m-n+n_e)), \, n - n_e+1 \le m \le n. 
		\end{cases}
	\end{align}
\end{small}
Equation \eqref{eq:narmax} admits a linear regression model of the form $z(t) = \sum_{i=1}^M\phi_i(t)\theta_i + e(t)$ for $t = 1, \cdots, N$ and a process noise $e(t)$. Or in matrix form: $Z = \Phi \Theta + \Xi$, where $\Phi = [\phi_1 \cdots \phi_M]$ denotes the regression matrix, and $\Theta=[\theta_1, \cdots, \theta_M]$ are parameters to be learned, typically in a regression process. The solution to the least squares cost $\min_\Theta \|z - \Phi \Theta\|_2$ yields the parameter estimates $\Theta$ for the nonlinear model. 

We adopt the computationally efficient Householder transformation in transforming the information matrix, $\Phi^T \Phi$ into a well-conditioned QZ-matrix partition. Afterwards, we recover  \texttt{NARMAX} parameters by solving the resulting triangular system of linear equations in a least squares sense. 

Given that the nonlinear structure is unknown ahead of time, we start with large values of $n_z, n_u, \text{ and } n_w$ in $P$ -- adding regression variables that capture natural properties such as damping and friction e.t.c. in order to capture as many nonlinear variation that exist in the data as possible. We then iteratively pruned the parameters  using the \textit{error reduction ratio} algorithm~\citep{Billings2010} within the forward orthogonal regression least square algorithm~\citep{NARMAXOLS}.  

\subsection{NARMAX Linearization}
The conditions stipulated in Assumption \ref{ass:realizability} must be realized before we can implement a learning-based procedure. The identified \texttt{NARMAX} model is then linearized about a suitable equilibrium point to obtain a form of \eqref{eq:system} in state space form. In our experience, we have always found Assumption \ref{ass:realizability} to be satisfied after linearization. Suppose that the pair $(A,B)$ is still not controllable after linearization (we have not seen this in practice), a perturbation can be made of the multi-input LTI system as follows: Suppose that there exists a diagonalizable matrix $M$ such that $\bar{A} = M^\top A M$, and $\bar{B} = M^\top B$. Then $(A, B)$ can be reduced to $(\bar{A}, \bar{B})$ as follows:
\begin{align}
	\bar{A} = \begin{bmatrix}
		A_{cnt} & * \\ 0 & \bar{A}_{cnt}
	\end{bmatrix}, \quad  \bar{B} = \begin{bmatrix}B_{cnt} & \cdots & 0\end{bmatrix}^T
\end{align}
for\footnote{$\bar{X}_{cnt}$ signifies the non-controllable part of $X$.}
\begin{align}
	A_{cnt} = \begin{bmatrix}
		A_{11} & A_{12} \cdots A_{1,p-1} & A_{1p} \\
		A_{21} & A_{22} \cdots A_{2,p-1} & A_{2p} \\
		0 & A_{32} \cdots A_{3,p-1} &A_{3p} \\
		\vdots   & \vdots & \vdots \\
		0   & \cdots & A_{pp}
	\end{bmatrix}, \,\,
	B_{cnt} = \begin{bmatrix} B_1 \\ 0 \\ \vdots \\ 0\end{bmatrix}
\end{align}
where $p$ is a pair's controllability index, blocks $B_1, A_{21}, \linebreak[0] \cdots, A_{p,p-1} $ possess full row ranks, and $ dim(\bar{A}_{cnt}) = dim[\overline{(A, B)}_{cnt}$.

\subsection{LEQG/LQ Differential Game}
In~\citep[Algorithm II]{CuiMoluxTAC}, we introduced an iterative solver for the closed-loop controls $u$ and $w$ respectively. 
A key drawback is the need for the first $K_1 \in \mc{K}$ to be known. This limits the practicality of the algorithm to data-driven PO schemes. In addition, our examples did not illustrate a means of respecting the \textit{stabilizability} and \textit{detectability} assumptions needed to guarantee a solution to the minimax problem \eqref{eq:lq_minimax}. We now provide an all-encompassing learning scheme for obtaining the solution to the mixed $\htwo/\hinf$-control problem in a purely data-driven setting compatible with modern model-free policy optimization schemes. 

Let $p \in \mc{P}$ and $q \in \mc{Q}$ denote the iteration indices at which the controllers $u_p(t)=K_p x(t)$ and $w_q(t) = L_p^qx(t)$ are updated in \eqref{eq:disturb_control},  where $K_p$ and $L_p^q$ are feedback gains  
\begin{align}
	K_p = -R^{-1}B_1^T P_{p}^q, \,\, L_{p}^q = \gamma^{-2}B_2^TP_{p}^q.
	\label{eq:gains_cl}
\end{align}
Furthermore, let the closed-loop transition matrix under the gains of \eqref{eq:gains_cl}, and quadratic matrix term in \eqref{eq:lq_minimax} be (see \citep[Equation 12]{CuiMoluxTAC})
\begin{align}
	A_\gamma = A - B_1 K_p + B_2 L_p^q, \,\,Q_\gamma = C^TC + K_p^T R K_p.
	\label{eq:tran_mat_closed_loop}
\end{align}

\textbf{Observe}: $P_p^q$ is finite if and only if the closed-loop system matrix $A_\gamma$ possesses eigenvalues with negative real parts. In this case, $P_p^q, \text{ for } p, q = 0, 1, 2, \cdots $ is the unique positive definite solution to the closed-loop GARE 
\begin{align}
	&A_\gamma^T P_p^q + P_p^q A_\gamma + Q_\gamma - \gamma^{-2}L_p^{qT} L_p^q=0
	\label{eq:riccatti_closed_loop}
\end{align}
where recursively, \eqref{eq:gains_cl} holds for $p,q = 1, 2,$. Note that $K_1$ must be chosen such that $A_\gamma^1 = A - B_1 K_1 + B_2^T L_1^1$ is Hurwitz and its closed-loop $\hinf$ norm transfer function is bounded from above by a user-defined $\gamma >0$~\citep{Kleinman1968}. Then 
\begin{inparaenum}[(i)]
	\item $K_1 \le P_{p+1}^q \le P_p^q \le \cdots, \quad p,q = 1, 2,\, $ 
	\item $\lim_{(p,q)\rightarrow \infty} \, P_p^q = P$. See proof in~\citep{CuiMoluxTAC}.
\end{inparaenum}
%

\subsection{Mixed Sensitivity Initialization}
\begin{algorithm}[tb!]
	\caption{Search for the closed-loop $\hinf$-norm
		\label{alg:gamma_finder}}
	\begin{algorithmic}[1]
		\State{Given a user-defined step size $\eta>0$}
		\State Set the initial upper bound on $\gamma$ as $\gamma_{ub} = \infty$.
		\State Initialize a buffer for possible $\hinf$ norms for each $K_1$ to be found,  $\Gamma_{buf} = \{\}.$
		\State Initialize ordered \texttt{poles} $\mc{P} = \{p_i \in Re(s) <0 \,| \, i = 1,2, \}$ \Comment{$p_1  < p_2 <\cdots$}
		\For{$p_i \in \mc{P}$}
		\State Place $p_i$ on \eqref{eq:system};  \label{line:place}  \Comment{\Citep{TitsYang}}
		\State Compute stabilizing $K_1^{p_i}$ \label{line:gain_def}
		\State Find lower bound $\gamma_{lb}$ for $H(\gamma, K_1^{p_i})$; \Comment{using \eqref{eq:gamma_lb}}
		\State $\Gamma_{buf}(i)$ = \texttt{get\_hinf\_norm}($T_{zw}, \gamma_{lb}, \, K_1^{p_i}$).
		\EndFor
		\Function{\texttt{\textproc{get\_hinf\_norm}}}{$T_{zw}, \gamma_{lb}, \, K_1^{p_i}$}
		\While{$\gamma_{ub} = \infty$}
		\State $\gamma:= (1 + 2 \eta)\, \gamma_{lb}$;
		%
		\State Get $\lambda_i(H(\gamma, K_1^{p_i}))$ \Comment{\cf \eqref{eq:ham_cl}}
		\If{\texttt{Re}($\Lambda)\neq \emptyset$ for  $\Lambda=\{\lambda_1, \cdots \lambda_n\}$}
		\State Set $\gamma_{ub} = \gamma$;  exit
		\Else
			\State Set buffer $\Gamma_{lb} =\{\}$
			\For{$\lambda_k \in \{\texttt{Imag}{(\Lambda})_{:p-1}\}$} \Comment{$k=1$ to $K$}
				\State Set $m_k = \frac{1}{2}(\omega_k + \omega_{k+1})$	
				\State Set $\Gamma_{lb}(k) = \max \{\sigma\left[T_{zw}(jm_k)\right]\}$;
			\EndFor
			\State $\gamma_{lb} = \max(\Gamma_{lb})$
		\EndIf
			\State Set $\gamma_{ub} = \frac{1}{2}(\gamma_{lb} + \gamma_{ub})$.
		\EndWhile
			\State \Return $\gamma_{ub}$
		\EndFunction
	\end{algorithmic}
\end{algorithm} 
In~\citep{CuiMoluxTAC}, we had established that in order for our model-free algorithm to work in a purely data-driven setting, $K_1$ must be in the constraint set $\mc{K}$. The means for finding a $K_1$ that satisfies the constraints equation \eqref{eq:constraint} is itemized in algorithm \ref{alg:gamma_finder}. 

Following Corollary \ref{cor:gamma_inf}, we must first find the upper bound of $\gamma$ \ie $\gamma^\infty:=\gamma_{ub}$ whereupon the unique, finite, and positive definite solution to the GARE is satisfied. Let us now introduce the following proposition.
\begin{proposition}[\cite{HinfFastCompute}]
	For all $\omega_p \in \reline$, we have that $j\omega_p$ is an eigenvalue of the Hamiltonian $H(\gamma_1)$ if and only if $\gamma_1$ is a singular value of $T_{zw}(j\omega_p)$.
	\label{prop:steinbuch}
\end{proposition}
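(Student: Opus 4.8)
The plan is to convert the spectral condition on the $2n\times 2n$ Hamiltonian $H(\gamma_1)\equiv H(\gamma_1,K)$ of \eqref{eq:ham_cl} into a condition on the Hermitian matrix $T_{zw}(j\omega_p)^{\ast}T_{zw}(j\omega_p)$, by a Schur-complement factorisation of the characteristic polynomial $\det\big(j\omega_p I - H(\gamma_1)\big)$ — the same identity that underpins the bisection-type $\hinf$-norm algorithms. Write $A_K = A - B_1 K$ and $C_K = C - DK$; by Assumption~\ref{ass:realizability} ($D^TC=0$) the closed-loop state weight appearing in \eqref{eq:ham_cl} is $C_K^{T}C_K$ and, from \eqref{eq:tranfer_cl}, $T_{zw}(s) = C_K(sI - A_K)^{-1}B_2$. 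Every resolvent used below is well defined provided $j\omega_p\notin\operatorname{spec}(A_K)$, which holds in particular for the stabilising candidate $K$ of algorithm~\ref{alg:gamma_finder} ($A_K$ Hurwitz); the degenerate case is dealt with at the end.

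First I would expand $\det\big(j\omega_p I - H(\gamma_1)\big)$ by Schur complement with respect to the $(1,1)$ block $j\omega_p I - A_K$, then collapse the inner $A_K$-resolvents into the transfer-function blocks using Sylvester's determinant identity $\det(I-XY)=\det(I-YX)$ together with $B_2^{T}(sI+A_K^{T})^{-1}C_K^{T} = -\,T_{zw}(-s)^{T}$, to reach
\[
\det\big(j\omega_p I - H(\gamma_1)\big) \;=\; \det(j\omega_p I - A_K)\,\det(j\omega_p I + A_K^{T})\,\det\!\Big(I - \gamma_1^{-2}\,T_{zw}(-j\omega_p)^{T} T_{zw}(j\omega_p)\Big).
\]
Because $A,B_1,B_2,C,D,K$ are real one has $\overline{T_{zw}(j\omega_p)}=T_{zw}(-j\omega_p)$, hence $T_{zw}(-j\omega_p)^{T}=T_{zw}(j\omega_p)^{\ast}$, so the last determinant equals $\det\big(I-\gamma_1^{-2}T_{zw}(j\omega_p)^{\ast}T_{zw}(j\omega_p)\big)$. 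Since $j\omega_p\notin\operatorname{spec}(A_K)$ the two outer determinants are nonzero, whence
\[
j\omega_p\in\operatorname{spec}\big(H(\gamma_1)\big)\iff \det\!\big(I-\gamma_1^{-2}T_{zw}(j\omega_p)^{\ast}T_{zw}(j\omega_p)\big)=0 \iff \gamma_1^{2}\in\operatorname{spec}\big(T_{zw}(j\omega_p)^{\ast}T_{zw}(j\omega_p)\big),
\]
and the rightmost condition says exactly that $\gamma_1$ is a singular value of $T_{zw}(j\omega_p)$. Both implications of the proposition fall out of this chain of equivalences at once.

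The main obstacle I anticipate is the sign and normalisation bookkeeping in the Schur step: one must track the $\gamma^{-1}$ scalings and the signs of the off-diagonal blocks $-\gamma^{-1}B_2B_2^{T}$ and $-\gamma^{-1}(C^{T}C+K^{T}RK)$ in \eqref{eq:ham_cl} (and the role of $R=D^{T}D-\gamma^2 I$, $S=DD^{T}-\gamma^2 I$) carefully enough that the cross term enters the final determinant as $I-\gamma_1^{-2}T_{zw}^{\ast}T_{zw}$ and not as $I+\gamma_1^{-2}T_{zw}^{\ast}T_{zw}$, which could never be singular; getting this sign right is where the ``if and only if'' actually resides. A route that sidesteps the determinant algebra, and also covers non-Hurwitz $K$ for which $j\omega_p$ might be a pole of $T_{zw}$, is a direct eigenvector argument: from a right/left singular pair $T_{zw}(j\omega_p)u=\gamma_1 y$, $T_{zw}(j\omega_p)^{\ast}y=\gamma_1 u$ one forms the vector whose two halves are $(j\omega_p I - A_K)^{-1}B_2 u$ and a suitably scaled $(j\omega_p I + A_K^{T})^{-1}C_K^{T} y$, and verifies block row by block row that $\big(H(\gamma_1)-j\omega_p I\big)$ annihilates it; conversely, splitting an imaginary eigenvector of $H(\gamma_1)$ into top and bottom halves and eliminating one half through the resolvent of $A_K$ reconstructs such a singular pair. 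Either way, the crux is the single identity relating the Hamiltonian resolvent to $T_{zw}(j\omega_p)$.
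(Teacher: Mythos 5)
The paper itself gives no proof of this proposition: it is imported verbatim from the cited reference, and the only commentary supplied is the remark about how it is used inside Algorithm~\ref{alg:gamma_finder}. Your argument is therefore not ``a different route from the paper'' so much as a reconstruction of the standard proof of the underlying classical result (the Boyd--Balakrishnan--Kabamba / Bruinsma--Steinbuch lemma), and as such it is essentially correct: the Schur-complement factorisation
\[
\det\bigl(j\omega_p I - H(\gamma_1)\bigr)=\det(j\omega_p I - A_K)\,\det(j\omega_p I + A_K^{T})\,\det\bigl(I-\gamma_1^{-2}T_{zw}(j\omega_p)^{\ast}T_{zw}(j\omega_p)\bigr)
\]
together with $\overline{T_{zw}(j\omega_p)}=T_{zw}(-j\omega_p)$ and the nonsingularity of the two outer factors for Hurwitz $A_K$ is exactly the right chain, and your fallback eigenvector construction is the standard way to handle the direction in which $j\omega_p$ might be a pole.

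One point you flagged as ``bookkeeping'' is in fact substantive and deserves to be resolved rather than anticipated. Carrying out your Schur step with the blocks \emph{as printed} in \eqref{eq:ham_cl}, i.e. $(1,2)$-block $-\gamma^{-1}B_2B_2^{T}$ and $(2,1)$-block $-\gamma^{-1}(C^{T}C+K^{T}RK)=-\gamma^{-1}C_K^{T}C_K$, the cross term enters with a \emph{plus} sign and the last factor becomes $\det\bigl(I+\gamma_1^{-2}T_{zw}^{\ast}T_{zw}\bigr)$, which is never zero; the proposition would then be vacuous. The equivalence you want requires the two off-diagonal blocks to have \emph{opposite} signs (e.g. $+\gamma^{-1}B_2B_2^{T}$ above and $-\gamma^{-1}C_K^{T}C_K$ below), which is the convention in the cited reference once $D=0$ in the closed loop, so that $R=D^TD-\gamma^2I=-\gamma^2 I$. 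In other words, your proof is correct for the correct Hamiltonian, and in running it you would have exposed a sign typo in \eqref{eq:ham_cl}; state that correction explicitly rather than leaving it as an anticipated obstacle. A second, minor gap: your equivalence ends at $\gamma_1^{2}\in\operatorname{spec}\bigl(T_{zw}^{\ast}T_{zw}\bigr)$, which identifies $\gamma_1$ as a singular value only because $\gamma_1>0$ is assumed throughout the paper; say so.
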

\begin{remark}
	Singular value computation is easily obtainable given a frequency of a system. Proposition \ref{prop:steinbuch} allows us to obtain all frequencies that correspond to a single eigenvalue. 
\end{remark}

\textbf{Procedure}: We search for stabilizing gains $K_1^{p_i}$ over the space of negative reals (or range matrix inequalities for multiple input systems) such that each $K_1^{p_i}$ on line \ref{line:gain_def} of Algorithm \ref{alg:gamma_finder} is stabilizing. A starting lower bound for each $\gamma_i$ corresponding to gain $K_1^{p_i}$ can be set 
\begin{align}
	\gamma_{lb}:=\max \{\eigmax(G(0)),\eigmax(G(j\omega_p)), \eigmax(B_2)\}
	\label{eq:gamma_lb}
\end{align}
where $\omega_p$ is chosen as specified in \citep[Eq. (4.4)]{HinfFastCompute}. The $\hinf$ computation scheme is fast and has a guaranteed quadratic convergence (since $\gamma_{lb}(i+1) > \gamma_{lb}(i)$) if the poles are chosen to make $(A-B_1K)$ Hurwitz. Given a user-defined step size, $\eta$, the rest of the algorithm consists in iteratively increasing the value of $\gamma_{lb}$ until all eigenvalues of the closed-loop system Hamiltonian \cf \eqref{eq:ham_cl} have no imaginary part.

The $\hinf$ norm computation scheme is based on the singular values of \eqref{eq:tranfer_cl} and the eigenvalues of \eqref{eq:ham_cl}. For poles far to the left of the origin, $\gamma$ will be small. However, as $\lambda \rightarrow 0$, $\gamma \rightarrow \infty$. Thus, a critical value of $\gamma^\star$ can be obtained (see Fig. \ref{fig:hinf}) above which the system becomes unstable (the cost becomes infinite; \cf\citep[Fig. 1]{LekaniDG}). We employ this heuristic to choose a $K_1$ that satisfies constraint \eqref{eq:constraint}.
\begin{figure}[tb!]
	\centering 
	\includegraphics[width=\columnwidth]{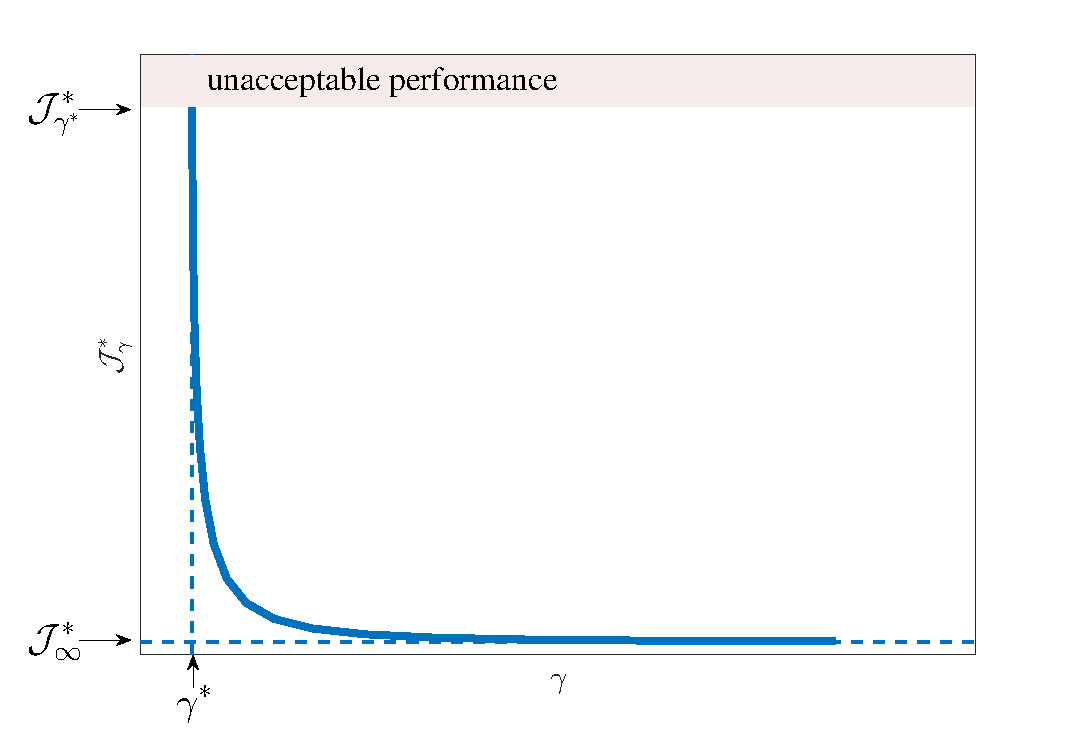}
	\caption{\footnotesize{Performance Robustness Trade-off Curve.}}
	\label{fig:perf_robust}
\end{figure}
\begin{figure}[tb!]
	\centering 
	\includegraphics[width=\columnwidth]{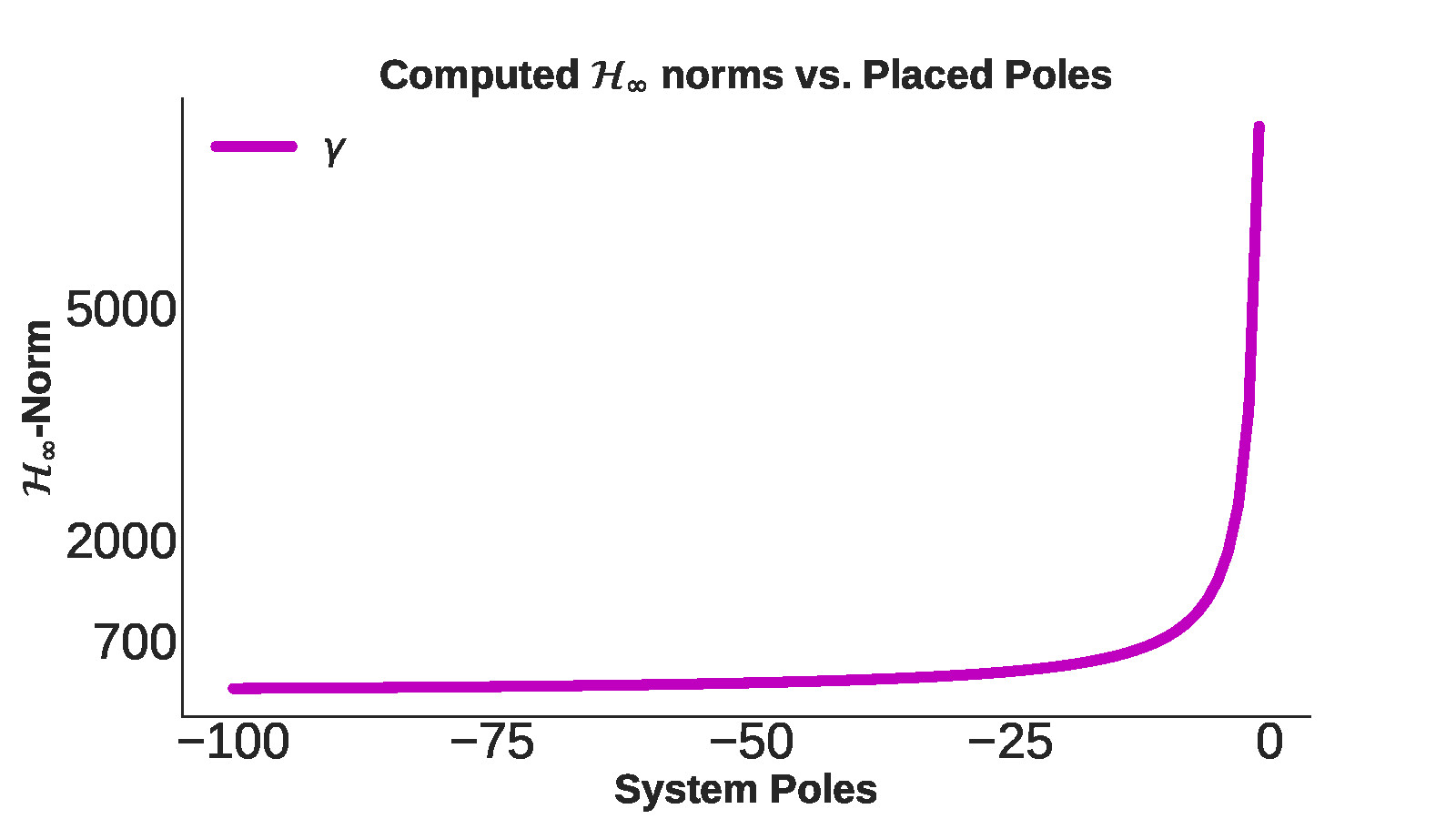}
	\caption{\footnotesize{Computed $\hinf$-norm for searched poles. We see that the closer to the origin, the greater the value of $\| T_{zw}\|_{\hinf}$}}
	\label{fig:hinf}
\end{figure}
This algorithm \ref{alg:gamma_finder} finds the system's $\hinf$ norm; part of it is an adaptation of~\citep{HinfFastCompute}'s fast $\hinf$ computation algorithm which enjoys quadratic convergence as opposed to the popular bisection algorithm~\citep{ZhouEssentials}. On line \ref{line:place}, we used~\citep{TitsYang}'s globally fast and convergent pole placement algorithm and leveraged the implementation provided in the \texttt{scipy} library.

\subsection{Iterative Two-Player LQ Zero-Sum Game}
We now analyze the dynamic game. Putting \eqref{eq:tran_mat_closed_loop} into  \eqref{eq:riccatti_closed_loop}, we have
\begin{align}
	(A^TP_p^q +& P_p^qA)+(Q_p-\gamma^{-2}L_p^{qT}L_p^q) + L_p^{qT} B_2^TP_p^q - \nonumber \\
	& - K_p^T B_1^T P_p^q -P_p^q B_1K_p + P_p^q B_2 L_p^q =0,
\end{align}
which in vector form can be written as 
%
\begin{align}
	&\svec\left(A^T P_p^q + P_p^q A\right) + \svec\left(Q_p - \gamma^{-2}L_p^{qT}L_p^q\right) 
	\label{eq:mf_vect} \nonumber \\
	&-\smat
	\left[(I_n\otimes K_p^T) + (K_p^T \otimes I_n)T_{\vec}\right]\vec(B_1^T P_p^q)     \\
	& +\smat\left[I_n \otimes L_p^{qT}B_2^T + (L_p^q B_2^T \otimes I_n)\right] \mat(\svec(P_p^q)) \nonumber 
\end{align}
where $p,\,q$ are iteration indices for the controller $u$ and disturbance $w$ respectively (introduced formally in Algorithm \ref{alg:mixed_synthesis}) and $n, m$ 
are as defined in \S \ref{subsec:sys}. We know that the differential game \eqref{eq:lq_minimax} admits equal upper and lower \textit{optimal} values owing to the GARE \eqref{eq:lq_two_player} having a positive definite solution i.e. $\mc{J}^\star = x^T P x$~\citep[Th 4.8 (iii)]{BasarBook}. Control laws must therefore be computed along the trajectories of \eqref{eq:system}, using the derivative of $\mc{J}^\star = x^TPx$. At the iteration pair $(p,q)$, $d(\mc{J}^\star(x; t)))$ admits the solution (by It\^{o}'s differential rule) 
\begin{align}
	\begin{split}
		&d(x^TP_p^qx) = x^T(A^TP_p^q + P_p^qA_p^q)xdt + 2x^T P_p^qB_1u_p dt \\
		&\qquad + 2x^T P_p^q B_2dw + \text{Tr}(B_2^T P_p^q B_2) dt
	\end{split}
	\label{eq:value_positive}
\end{align}
where $\text{Tr}(M)$ denotes the trace of $M$. 

Letting $\phi(t) = [\vecv^T(x), 2(x^T \otimes u^T), 1]^T$, and integrating the above on the interval $[0, t_f]$, we find that 
\begin{align}
	\begin{split}
		&\underbrace{\frac{1}{t_f}\int_{0}^{t_f} \phi d(\vecv^T(x))}_{\hat{\Psi}(t_f)}\svec(P_p^q) 
		= \begin{bmatrix}\svec({A^T P_p^q + P_p^qA}) \\  \vec(B^TP_p^q) \\ Tr(B_2^T {P} B_2)\end{bmatrix} \times \\
		& \quad\underbrace{\frac{1}{t_f}\int_{0}^{t_f} \phi \phi^T dt}_{\hat{\Phi}(t_f)} +  \frac{1}{t_f}\int_{0}^{t_f}2\phi x^T P_p^q B_2dw
	\end{split}
	\label{eq: intergration}
\end{align}
where the last term tends to zero as $t_f \rightarrow \infty$. We now recall Lemmas A.7 and A.8 in~\citep{CuiMoluxTAC}, so that the following holds almost surely: $ \lim_{t_f \to \infty} \hat{\Phi}(t_f) = \Phi \equiv \mathbb{E}(\phi \phi^{T})$, and $\Psi = \lim_{t_f\rightarrow \infty} \hat{\Psi}(t_f)$. Hence, 
\begin{align}
	\begin{bmatrix} 
		\svec(A^TP_p^q + P_p^qA) 
		\\  
		\vec(B_1^TP_p^q) 
		\\ 
		\text{Tr}(B_2^T P_p^q B_2)
	\end{bmatrix} = \Phi^{-1}(t_f) \Psi(t_f) \,\, \svec(P_p^q).
\end{align}
Furthermore, let $n_1:=n(n+1)/2$ and $n_2:=n_1+mn$. Then, we may write
\begin{subequations}
	\begin{align}
		\svec({A^T P_p^q + P_p^qA}) &= [\Phi^{-1}]_{[1:n_1]} \Psi \,\, \svec(P_p^q) \\
		\vec(B_1^TP_p^q) &= [\Phi^{-1}]_{[1+n_1:n2]} \Psi \,\, \svec(P_p^q).
		\label{eq:mf_split_2}
	\end{align}
	\label{eq:mf_split}
\end{subequations}
Define $\hat{\Phi}_1 = [\Phi^{-1}]_{[1:n_1]}$, $\hat{\Phi}_2 = [\Phi^{-1}]_{[1+n_1:n2]}$, so that \eqref{eq:mf_vect} in light of \eqref{eq:mf_split} becomes 
%
%
\begin{align}
	&\hat{\Phi}_1 \Psi \,\, \svec(P_p^q)+ \svec\left(Q_p - \gamma^{-2}L_p^{qT}L_p^q\right) 
	\nonumber \\
	&-\smat	\left[(I_n\otimes K_p^T) + (K_p^T \otimes I_n)T_{\vec}\right]\hat{\Phi}_2 \Psi \,\, \svec(P_p^q)   \nonumber \\
	& +\smat\left[I_n \otimes L_p^{qT}B_2^T + (L_p^q B_2^T \otimes I_n)\right] \mat(\svec(P_p^q))=0 
\end{align}
Rearranging the above and letting
\begin{align}
	\Upsilon_p^q &= \hat{\Phi}_1 \Psi-\smat	\left[(I_n\otimes K_p^T) + (K_p^T \otimes I_n)T_{\vec}\right]\hat{\Phi}_2 \Psi \nonumber \\
	+&\smat\left[I_n \otimes L_p^{qT}B_2^T + (L_p^q B_2^T \otimes I_n)\right] \mat(
	\label{eq:upsilon}
\end{align}
it can be verified that the cost matrix $P_p^q$ (\cf~\eqref{eq:disturb_control}) admits the solution
\begin{align}
	\svec(P_p^q) = (\Upsilon_p^q)^{-1}\svec\left(Q_p - \gamma^{-2}L_p^{qT}L_p^q\right).
	\label{eq:riccati_main}
\end{align}
The entire procedure for updating the control laws in an iterative manner is described in Algorithm \ref{alg:mixed_synthesis}. 
\begin{algorithm}
	\caption{Mixed $\mathcal{H}_2/\mathcal{H}_\infty$-Control Synthesis \label{alg:mixed_synthesis}}
	\begin{algorithmic}[1]
		\State Collect I/O data, and identify the nonlinear model \eqref{eq:nlnr} following  \S \ref{subsec:narmax}. \label{line:narmax_ident}
		\State Obtain $Z^L = \{A, B_1, B_2, C, D\}$ by linearizing the NARMAX model in step \ref{line:narmax_ident} \Comment{See \S \ref{subsec:narmax}}
		\State Form matrices $R:=D^TD \succ 0$, $Q:=C^TC$
		\State Using $Z_L$, find $\hat{K}_{1} \in \mathcal{K}$   \Comment{alg.  \ref{alg:gamma_finder}; Pick a suitable $\gamma$}
		\State Run \eqref{eq:system}'s time response with $K_1 \in \mc{K}$ on $Z^L$. 
		\State Form state-control data $(\hat{\Phi}_1, \, \hat{\Phi}_2, \hat{\Psi})$ \Comment{Eq.  \eqref{eq:upsilon}};\label{alg:mf_data_collect}
		\State $\hat{K}_{\bar{p}}^{\bar{q}}, \hat{L}_{\bar{p}}^{\bar{q}}, \hat{P}_{\bar{p}}^{\bar{q}}$ = \texttt{robust\_gains}($\hat{\Phi}_1, \, \hat{\Phi}_2, \hat{\Psi}, x(t), \bar{p}, \bar{q}, \gamma$)
		\For{$t=1,\cdots, t_f$}
			\State Apply $u(x) = \hat{K}_{\bar{p}}^{\bar{q}} x(t), \, w(x) = \hat{L}_{\bar{p}}^{\bar{q}} x(t)$
			to eq. \eqref{eq:system} \label{alg:mf_time_step_control}
		\EndFor
		\Function{\texttt{robust\_gains}}{$\hat{\Phi}_1, \, \hat{\Phi}_2, \hat{\Psi}, \bar{p}, \bar{q}, \gamma$}
			\State Initialize $q=1$ 
			\State Initialize $L_1^1$ \Comment{Set to zeros or randomly initialize} 
			\For{$p \in 1 \text{ to } \bar{p}$}
				\While{$q \leq \bar{q}$}
					\State Find in order: $\hat{\Upsilon}_p^q, \hat{P}_{p}^q$ \Comment{Eq.  \eqref{eq:upsilon} \& \eqref{eq:riccati_main}};
					\State Compute $\hat{L}_{p}^q\leftarrow \gamma^{-2} B_2^T \hat{P}_{p}^q$ \Comment{Eq. \eqref{eq:disturb_control}}
					\State Update $q \leftarrow q+1$
				\EndWhile
			\State Form $\smat(\vec(\widehat{B_1^T{P}_{p}^{\bar{q}}}))$ 
			\Comment{Eq. \eqref{eq:mf_split_2}}
			\State Compute $\hat{K}_{p+1}^{\bar{q}} \leftarrow R^{-1} \widehat{B_1^T \, {P}_{p}^{\bar{q}}}$ \Comment{Feedback gain};
			\EndFor
			\State \Return $\hat{K}_{\bar{p}}^{\bar{q}}, \hat{L}_{\bar{p}}^{\bar{q}}, \hat{P}_{\bar{p}}^{\bar{q}}$
		\EndFunction
	\end{algorithmic}
\end{algorithm} 

We note in passing that inversion of matrix terms are efficiently computed using standard Cholesky factorizations. We refer interested readers to~\citep{CuiMoluxTAC} for the convergence and robustness analyses of our results.
\section{Results}
\label{sec:results}

We now present numerical results of the algorithm described in the foregoing.
\subsection{Car Cruise Control System}
We consider a car \textit{cruise control} system \citep[\S 3.1]{FdbkSys21} whereupon \textit{a controller $u(x(t)) = [u_1(t), u_2(t)]$ must maintain a constant velocity $v$ (the state), whilst automatically adjusting the car's throttle, $u_1(t), t \in [0, T]$} despite disturbances characterized by road  slope changes ($u_3 = \theta$), rolling friction ($F_r$), and aerodynamic drag forces ($F_d$). 

This control design problem is well-suited to our robust control formulation because
\begin{inparaenum}[(i)]
	\item the disturbances and state variables are separable and can be lumped into the form of the stochastic differential equations \eqref{eq:nlnr} and \eqref{eq:system};
	\item it is a multiple-input (throttle, gear, vehicle speed) single-output (vehicle acceleration) system that introduces modeling challenges;
	\item the entire operating range of the system is nonlinear though there is a reasonable linear bandwidth that characterize the input/output (I/O) system as we will see shortly.
\end{inparaenum}
The model is
%
	\begin{align}
		m \dfrac{dv}{dt} &= \alpha_n u \tau (\alpha_n v) - m g C_r sgn(u)  \nonumber \\ &\qquad - \dfrac{1}{2} \rho C_d A |v| v - mg \sin \theta
		\label{eq:data_collect}
	\end{align}
%
where  $v$ is the velocity profile of the vehicle (taken as the system's state), $m$ is vehicle's mass, $\alpha_n$ is the inverse of the vehicle's effective wheel radius, $\tau$ is the vehicle's torque -- it is controlled by the throttle $u:=u_1$. The rolling friction coefficient is  $C_r$ and $C_d$ is the aerodynamic drag constant for a vehicle of area $A$. The road curvature, $\theta$, is modeled as a Wiener process \cf \eqref{eq:system} with $w_i(t) \sim \mc{N}(0, 1)$ where for $i \in [1,\cdots,]$, $dw_i = \sum_{j=1}^i w_j$. If we let $x := v, \, u_1 := u, \, u_2:= \alpha_n$, and $u_3 := \theta$ 
%
%
and set $C_r = 0.01$, $C_d = 0.32$, $\rho = 1.3kg/m^{3}$, $A=2.4 m^2$ (following~\citep{FdbkSys21}), then the torque $\tau$ is $\tau = \tau_m-\tau_m\beta\left({\omega}/\omega_m-1\right)^2
$, where $\beta = 0.4, \, \omega_m = 420$ and $\tau_m = 190$. Simplified, we write 
$$\tau  = 190 - 76 \left(\dfrac{39 x}{420}-1\right)^2.$$
\subsection{Nonlinear Identification and Linearization}
\label{subsec:res_linearization}
In our \texttt{NARMAX} structure selection and model estimation scheme, we first start with a large number of parameters and regressors that consists of the polynomial expansion in \eqref{eq:narmax}, \texttt{sinuoisal}, and \texttt{signum} functions following \eqref{eq:data_collect}. We choose a polynomial degree of $3$ and the inputs $u$ and state lags $x$ were chosen as $[1,1,1]$ and $[1]$ respectively. We employed the forward regression orthogonal least squares algorithm~\citep{NARMAXOLS} in estimating the parametric terms of the model. We then employed the  \texttt{error reduction ratio}~\citep{NARMAXOLS, Billings2010} algorithm in pruning away extraneous terms. This whittled down the eventual model to the following parsimonious representation
\begin{align}
	\dot{x}(t) &= 0.062518 u_2(t) x(t)-0.12051 u_1(t)u_2^2(t)   \nonumber \\
	& + 0.00081339 u_2^3 (t) + 0.9767 \sin (u_3(t)).
\end{align}

The \texttt{NARMAX} structure selection and model estimation step produced  a \texttt{root relative test error} of $0.0661$ (see Fig. \ref{fig:ident_pred}). Using \eqref{eq:data_collect} with $u_3 \sim \mc{N}(0, 0.05)$, a constant gear ratio of $40$ and a car mass of $1600kg$, we collect I/O data with 40,000 samples in continuous time as shown in Fig. \ref{fig:ident_io}.With the input-output data, a NARMAX model was identified whose prediction error with respect to held-out validation data is shown in Fig. \ref{fig:ident_pred}. 
%
\begin{figure}[tb!]
	\centering
	\begin{minipage}[tb]{0.95\columnwidth}
		\includegraphics[width=\textwidth]{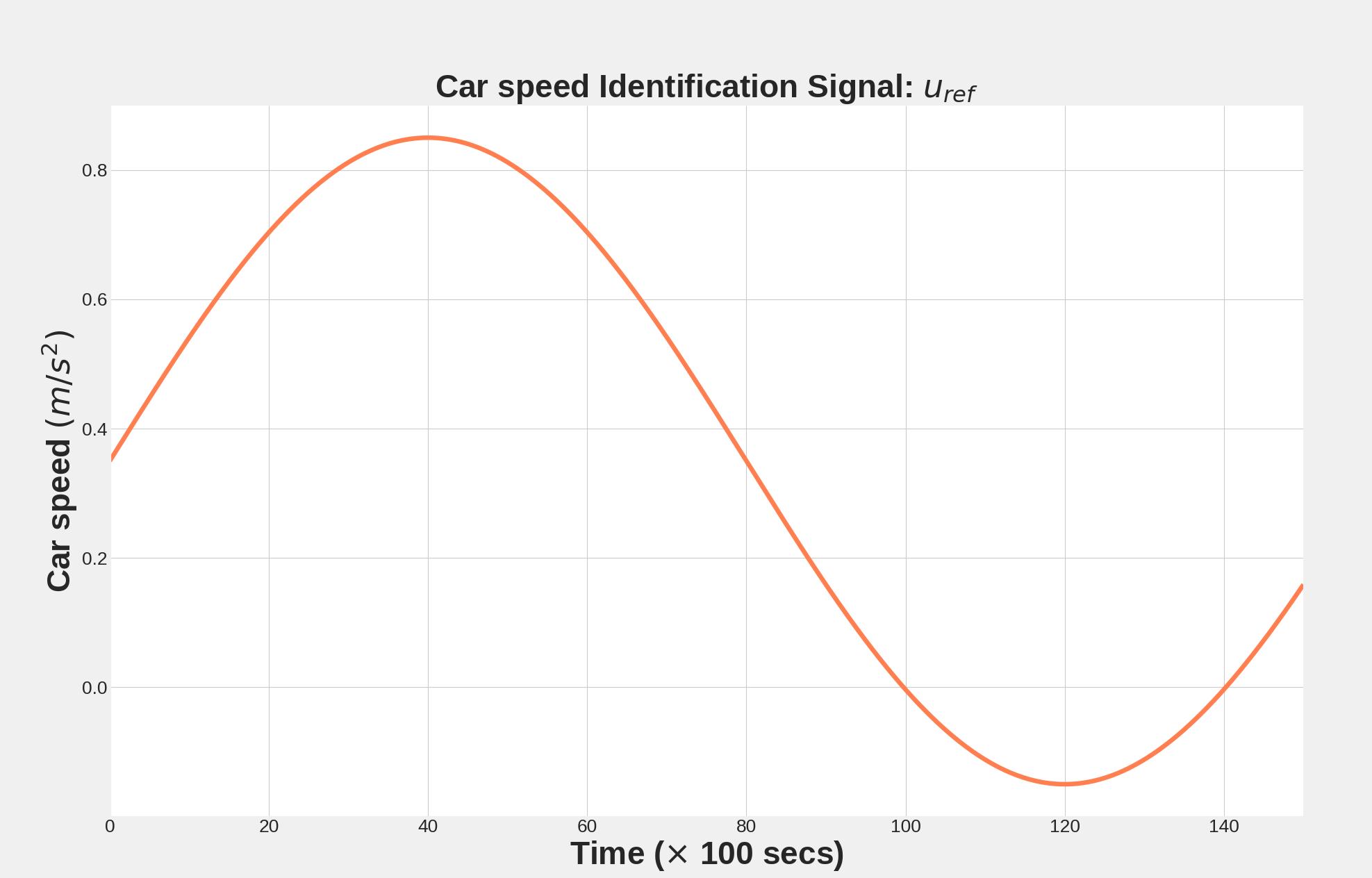}
	\end{minipage}
	\begin{minipage}[tb]{0.95\columnwidth}
		\includegraphics[width=\textwidth]{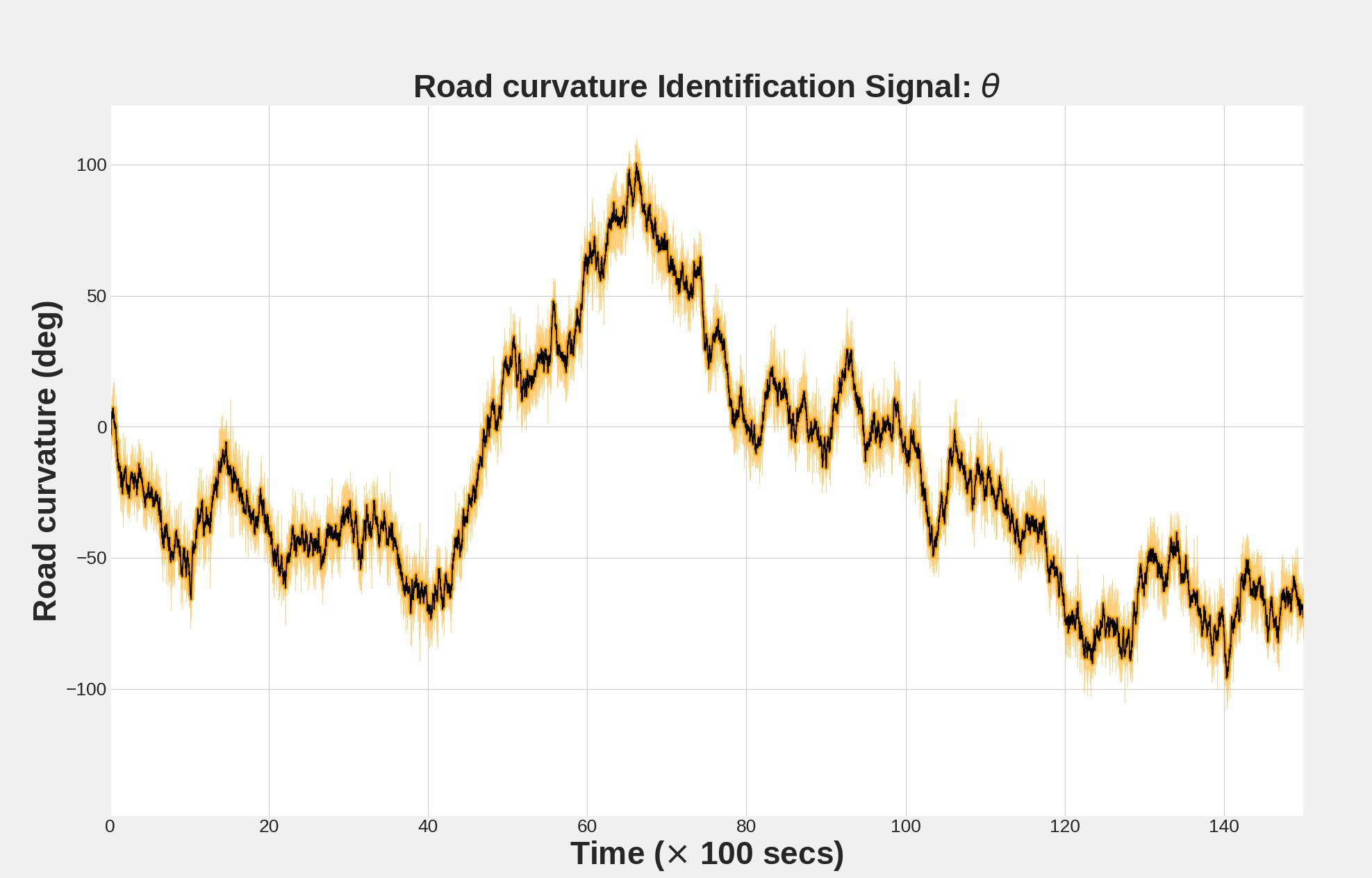}
	\end{minipage}
	\caption{\centering{\footnotesize{Car speed and road inclination input identification signals.}}}
	\label{fig:ident_io}
\end{figure}

%
%
\begin{figure}[tb!]
	\centering
	\begin{minipage}[tb]{0.95\columnwidth}
		\includegraphics[width=\textwidth]{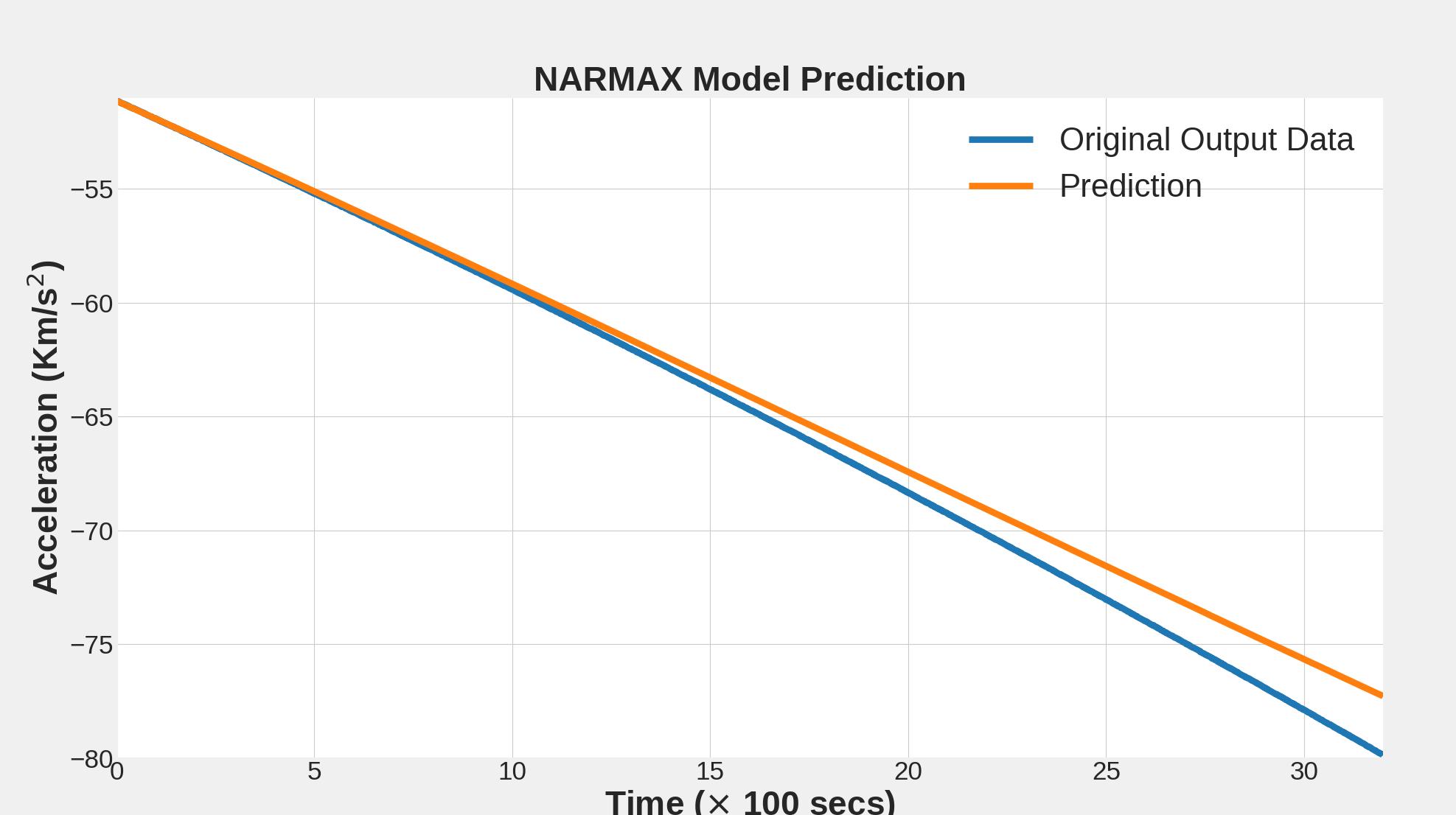}
	\end{minipage}
	\begin{minipage}[tb]{0.95\columnwidth}
		\includegraphics[width=\textwidth]{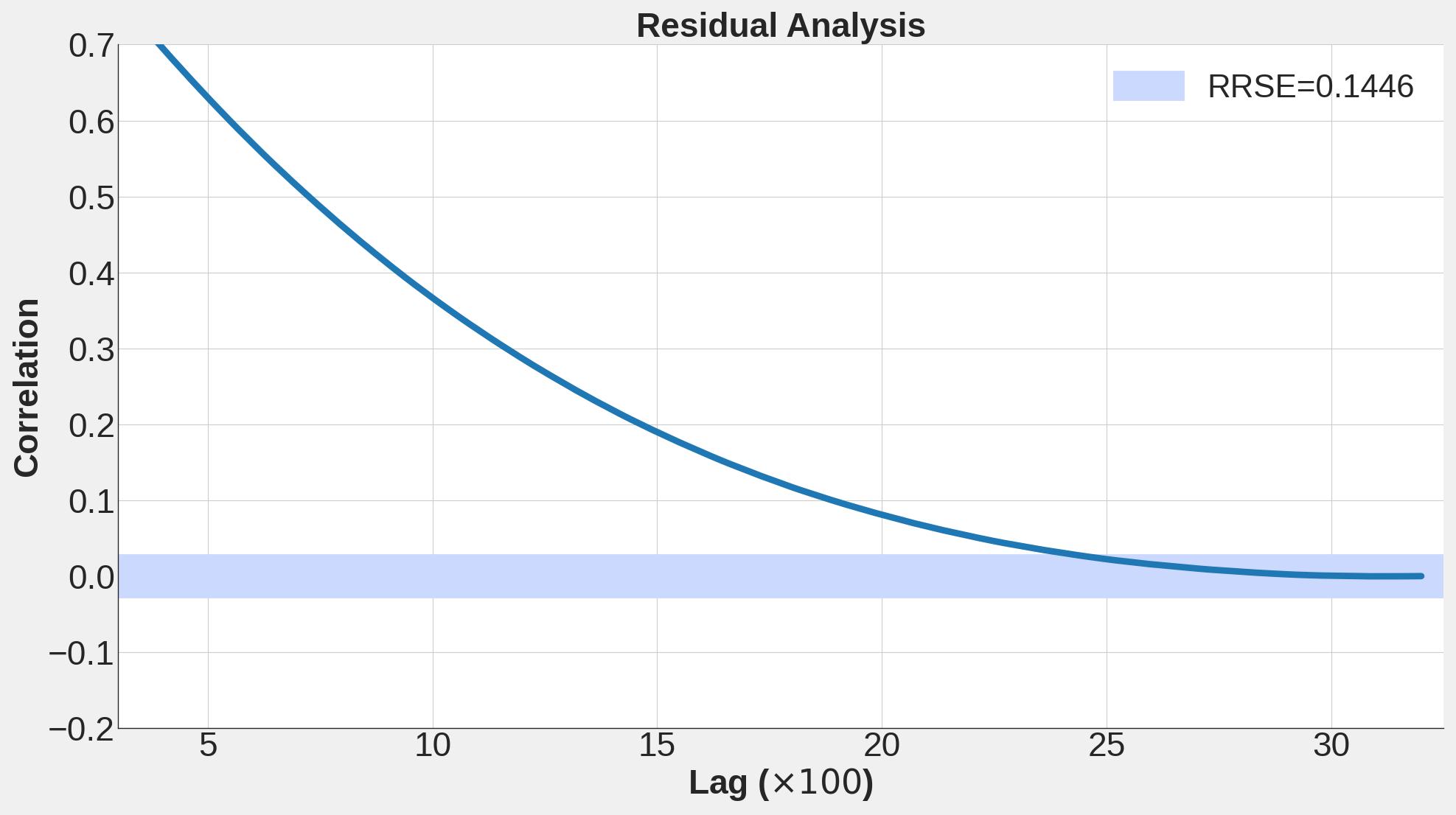}
	\end{minipage}
	\caption{\centering{\footnotesize{Top: Model prediction vs. actual car acceleration. Bottom: Residuals.}}}
	\label{fig:ident_pred}
\end{figure}

%
To amend the nonlinear control problem~\eqref{eq:nlnr} to the setup \eqref{eq:system}, we compute the values of the states, inputs, and outputs for system \eqref{eq:narmax}'s equilibrium points: $x_{eq}:=v_{ref}$, $u_{eq}:=\left[u_1, u_2 \right]$, and $z_{eq} \triangleq v_{ref}$ given initial  values $x(0) = 20$, $u(0) =  \left[0, 40\right]$, and $z(0) = 20$. The resulting linearized system \eqref{eq:system} is
\begin{align}
	A &= \left[\begin{array}{c}
		100.0288
	\end{array}\right], \,\, B_1 = \left[\begin{array}{cc}
	-193.072,  &  137.3123
\end{array}\right], \nonumber \\
	B_2 &= \left[\begin{array}{cc}
		-17014.7221,
		& -10557.48189
	\end{array}\right]\, C = \left[\begin{array}{cc}
1,  &  0
\end{array}\right]
\label{eq:linearized}
\end{align}
and $D$ is $[1,1]$. As seen, the pairs $(A, B_1)$ is stabilizable and $(C, A)$ is observable -- notable features of linearizing the \texttt{NARMAX} model in that it faithfully captures a system's parsimonious model.

\subsection{Efficacy of the Learning Algorithm}
After running Algorithm \ref{alg:gamma_finder}, we found  a $\gamma$ of value $500$ to be a suitable value for robustly compensating for a change in road slope with angle $\theta=40$. The goal is to regulate the speed of the car so that despite the change in slope, a constant speed of $40m/s$ is maintained. We then run Alg. \ref{alg:mixed_synthesis} for $R=\identity$, solve for $\Upsilon$ and equations \eqref{eq:upsilon} and \eqref{eq:riccati_main} based on collected data on the linearized system \eqref{eq:linearized}.
\begin{figure*}[tb]
	\centering
		\includegraphics [width=0.95\linewidth]{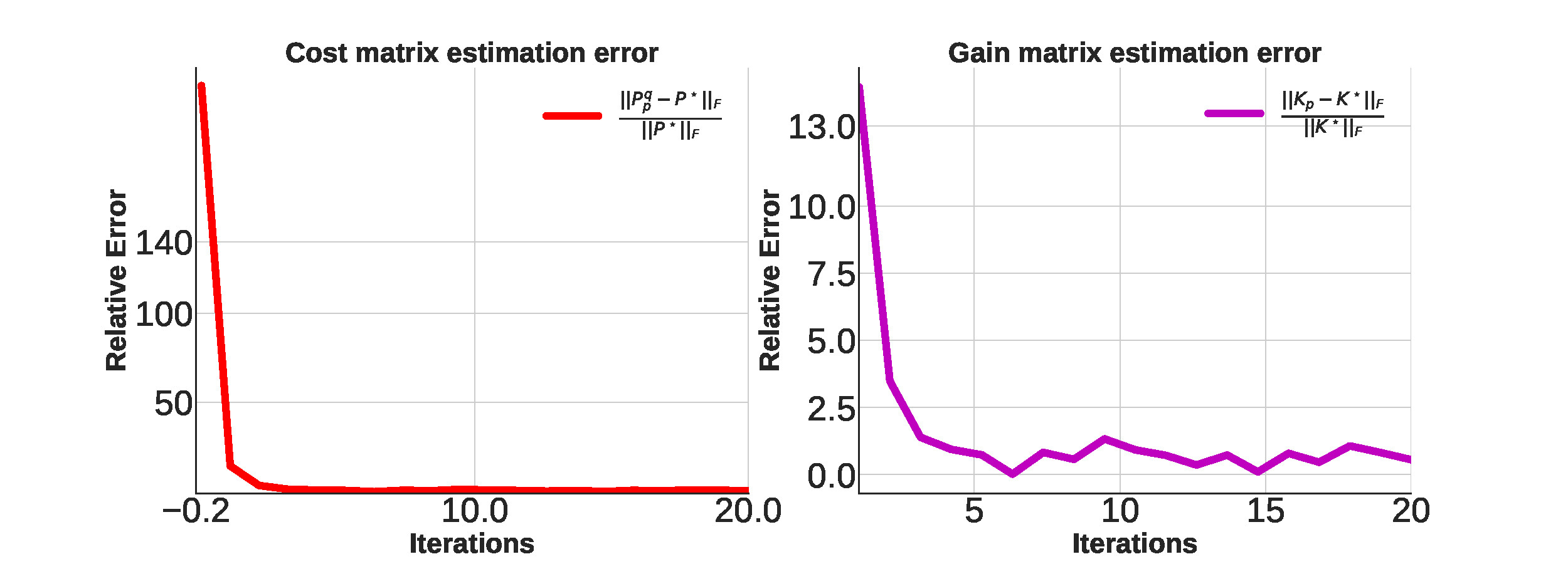}
	\caption{\footnotesize{Relative estimation error for the cost and gain matrices.}}
	\label{fig:gain_cost_mat}
\end{figure*}
We run Alg. \ref{alg:mixed_synthesis} on the collected data (see line \ref{alg:mf_data_collect} of Alg. \ref{alg:mixed_synthesis}). We then test the efficacy of the computed solutions to the final gains $K_{\bar{p}}^{\bar{q}}$ and cost matrix $P_{\bar{p}}^{\bar{q}}$ using our iterative solver (\cf Alg. \ref{alg:mixed_synthesis}) against (known) computed optimal values for the cost matrix $P^\star$ and gain $K^\star$ using Duncan's Riccati equation \eqref{eq:closed_loop_riccati} and control law in \eqref{eq:duncan_control}.  We set iteration max indices to $20$ and $30$. The relative errors between our solver and these solutions are shown in Fig. \ref{fig:gain_cost_mat}. We see that both parameters converge to their optimal values in within the first two iterations; this is despite the disturbance and unknown model aforetime.


\section{Conclusion}
\label{sec:conclude}
Following up on our recent contribution~\citep{CuiMoluxTAC}, we have  presented a framework for discarding the restricting assumption of \textit{stabilizability} and \textit{observability} in linear plants under a mixed sensitivity design framework. We first identified the nonlinear system, linearized it, found appropriate $\hinf$ bounds for the system and then deployed our learning algorithm. We introduced a fast means for finding the linearized system's $\hinf$-norm; and then simplified the two-loop mixed sensitivity algorithm earlier disseminated in~\cite{CuiMoluxTAC}. Further numerical results on a car's cruise controller is here presented to fortify the credibility of our previous results. Some open problems, which we intend to treat in the near future include 
 \begin{enumerate}
	     \item systems with finite-escape time in the solution to differential equations \eqref{eq:nlnr} and \eqref{eq:system} \ie there exists discontinuity that incapacitates the Lipschitz continuity assumption of $f(x, u)$ in $x$. Of what relevance is finite-time stability in such mixed sensitivity analyses?
	     \item multiplicative noise in the system dynamics; and
	     \item a large scale study of robustness analyses to distributed computing systems.
\end{enumerate}

\bibliography{ifac}

\end{document}